\newtheorem{lemma}{Lemma}
\newtheorem{corollary}{Corollary}
\begin{document}
\title{On Point Processes Defined by Angular Conditions
on Delaunay Neighbors in the Poisson-Voronoi Tessellation}
\author{Fran\c{c}ois Baccelli and Sanket S. Kalamkar}
\date{October 30, 2020}
\maketitle
\begin{abstract}
Consider a homogeneous Poisson point process of the Euclidean plane and its Voronoi tessellation.
The present note discusses the properties of two stationary point processes associated 
with the latter and depending on a parameter $\theta$. The first one is the set of points
that belong to some one-dimensional facet of the Voronoi tessellation and are such that
the angle with which they see the two nuclei defining the facet is $\theta$.
The main question of interest on this first point process is its intensity.
The second point process is that of the intersections of the said tessellation
with a straight line having a random orientation. Its intensity is well known.
The intersection points almost surely belong to one-dimensional facets.
The main question here is about the Palm distribution of the angle with which
the points of this second point process see the two nuclei associated with the facet.
The note gives answers to these two questions and briefly discusses their practical motivations. It also discusses natural extensions to dimension three.
\end{abstract}

\section{Introduction}

The statistical properties of the facets of 
the Voronoi tessellation of homogeneous point processes of the Euclidean plane 
are well-studied~\cite{Moller_1989}. 

This note is focused on a question which was apparently not considered so far,
which is the distribution of the angle with which points of
the one-dimensional facets of the Poisson-Voronoi tessellation see the two Delaunay neighbors defining the facet.
The motivation for this question stems from cellular radio networks and is briefly discussed
in the note. The problem is however of independent interest.

Let $\Phi = \lbrace X_1, X_2, \dotsc \rbrace$ be a homogeneous Poisson
point process of intensity $\lambda>0$ on $\mathbb{R}^2$.
Let $\mathcal{V}_{X_i} \in \mathbb{R}^2$ denote the Voronoi cell with nucleus $X_i \in \Phi$:
\begin{align}
\mathcal{V}_{X_i} := \left\lbrace x \in \mathbb{R}^2 : \|x - X_i\| \leq \inf_{X_j \in \Phi\setminus \lbrace X_i \rbrace} \|x - X_j\| \right\rbrace.
\end{align} 
It is well known that the Voronoi cells in question are all a.s. finite random polygons.  
The topological boundary of each cell consists in an a.s. finite number of a.s finite segments.
Each such segment is associated with a so called Delaunay pair, namely a pair of points of $\Phi$ 
such that the cells of these two points share a common boundary segment.

Consider the set of the points of the one-dimensional facets
of the Voronoi tessellation of $\Phi$ that see their Delaunay pair with a given angle.
This discrete set of points forms a stationary point process
which is a factor of the point process $\Phi$. It is discussed in Section \ref{sec:app}
where its intensity is determined. 

Another natural model features a random line of the plane and the
intersections of this line with the one-dimensional facets.
This defines a stationary point process on the line which is discussed in Section \ref{sec:lpp}.
The Palm distribution of the angles at which the points of the latter
point process see the Delaunay pairs of the facets that intersect the line is determined.

These problems have natural extensions in dimension three which are discussed
in Section \ref{sec:3d}.

Finally, Section \ref{sec:cellular} presents the cellular networking motivations
of the problems alluded to above.

\section{Planar Point Process with Prescribed Delaunay Angle}
\label{sec:app}

Below, an intrinsic order on pairs of points of $\mathbb{R}^2$ is selected, e.g., the natural
on the $x$ coordinate. 
For all pairs of points $(D,D')$ of $\mathbb{R}^2$ such that $D<D'$ w.r.t. this order,
and for all points $Z$ of $\mathbb{R}^2$, let $\widehat{D,Z,D'}$ denote the angle from $D$ to $D'$
in, e.g., the anti-trigonometric direction and in the referential with origin $Z$. 

Let $\theta\in (0,2\pi)$ be fixed. For each segment $S$ of the Voronoi tessellation of $\Phi$,
there is either 0, or 1 point $Z$ on this segment satisfying the following property: denote by $D_1$ and $D_2$ the Delaunay neighbors associated with $S$, ordered as above; there is either 0, or 1 point $Z$ of the segment such that the angle $\widehat{D_1,Z,D_2}$ is equal to $\theta$ mod $2\pi$.

Let $\Psi_\theta$ be the point process in $\mathbb R^2$ of all points satisfying the above property.
This is illustrated in Figure~\ref{fig:base_fig}, which depicts a point of the segment
belonging to the intersection of the boundary $\mathcal{V}_{X_1}$ and that of $\mathcal{V}_{X_2}$
satisfying this angular property.

\begin{figure}[!h]
\centering
\includegraphics[scale=1.2]{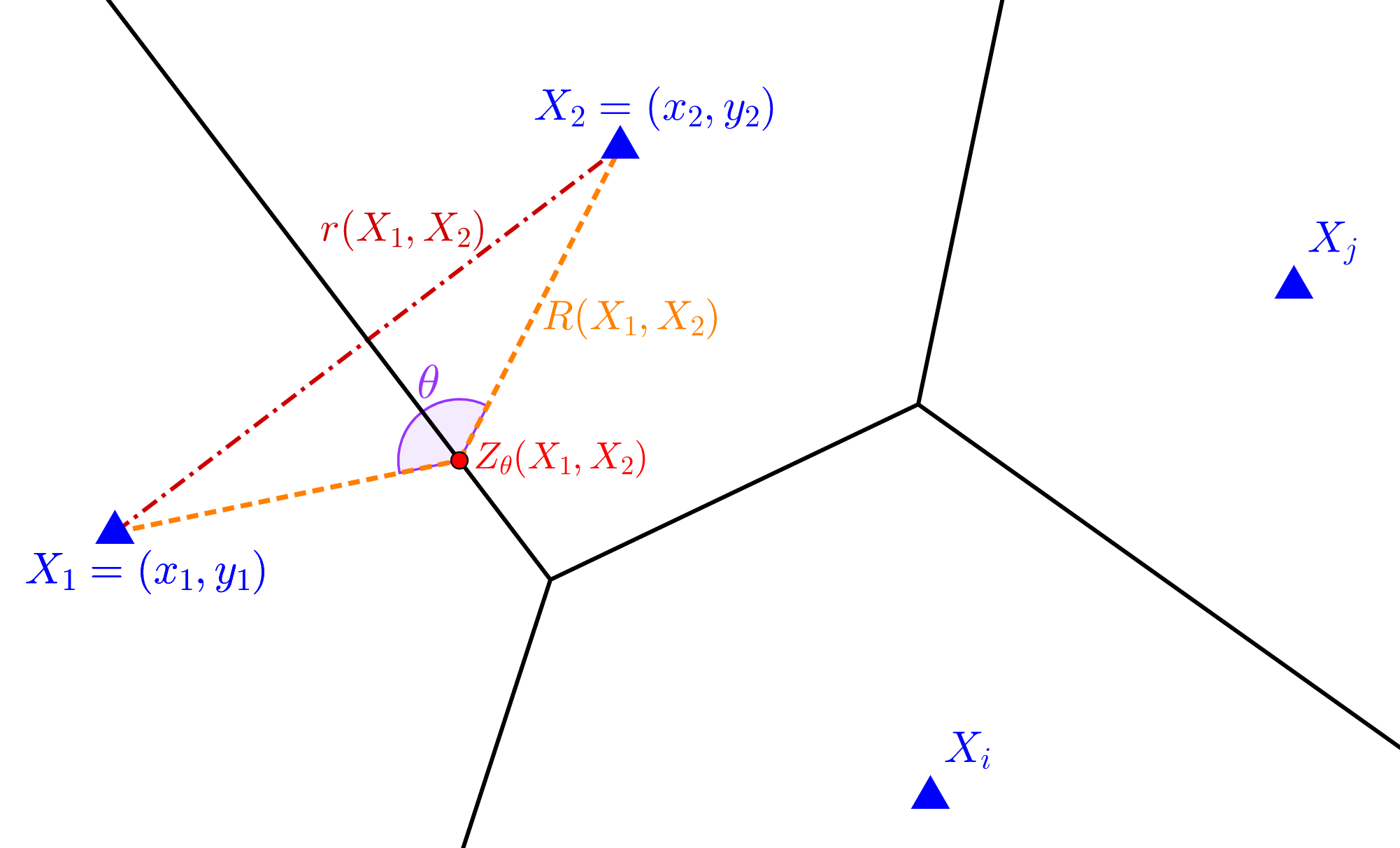}
\caption{
Triangles denote the points of the Poisson point process $\Phi$. Solid black lines denote the one-dimensional facets of the Poisson-Voronoi cells. The points $X_1$ and $X_2$ are the Delaunay neighbors associated with the one-dimensional facet containing $Z_{\theta}$, ordered as indicated above. Here $\theta\in (0,2\pi)$ with the above conventions.
}
\label{fig:base_fig}
\end{figure} 

\begin{lemma}
\label{lem1}
For all $\theta\in (0,2\pi)$, 
$\Psi_\theta$ is a stationary and ergodic point process.
Its intensity $\gamma_\theta$ is equal to $2\lambda \sin^2\frac{\theta}{2}$.
\end{lemma}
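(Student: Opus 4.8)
\emph{Proof plan.} The assertions on stationarity and ergodicity are the soft part: $\Psi_\theta$ is a deterministic, measurable and translation-equivariant factor of $\Phi$. Indeed, from $\Phi$ one reads off its Voronoi tessellation, and on each one-dimensional facet one tests the angular condition together with the ``empty disk'' condition characterising membership of a point in a Voronoi edge; all of this is local, measurable, and commutes with translations. Since $\Phi$ is stationary and mixing, $\Psi_\theta$ is accordingly stationary and ergodic. Local finiteness of $\Psi_\theta$ follows from local finiteness of the edge set of the Poisson--Voronoi tessellation together with the ``at most one point per facet'' property recalled above, so $\Psi_\theta$ is a genuine stationary point process.

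For the intensity I would use the multivariate Mecke formula. First, rewrite $\Psi_\theta$ as a sum over unordered pairs $\{x,y\}\subset\Phi$: order the pair as $x<y$ with the chosen order on $\mathbb R^2$, let $Z_\theta(x,y)$ be the unique point of the perpendicular bisector of $x$ and $y$ at which $\widehat{x,Z,y}$ equals $\theta$ mod $2\pi$, and place a unit mass at $Z_\theta(x,y)$ provided the \emph{open} disk centred at $Z_\theta(x,y)$ and passing through $x$ and $y$ contains no point of $\Phi$ other than $x$ and $y$ --- which is precisely the condition that $Z_\theta(x,y)$ lie on the Voronoi edge shared by the cells of $x$ and $y$ (the event that it is a Voronoi vertex requires three co-circular nuclei and is null). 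Writing $r(x,y):=\|Z_\theta(x,y)-x\|=\|Z_\theta(x,y)-y\|$ and applying Mecke, for any bounded Borel set $B$ one obtains
\[
\gamma_\theta\,|B| \;=\; \mathbb E\,[\Psi_\theta(B)] \;=\; \lambda^2\iint_{\{x<y\}}\mathbf 1\{Z_\theta(x,y)\in B\}\;e^{-\lambda\pi r(x,y)^2}\,dx\,dy,
\]
because $x$ and $y$ lie on the bounding circle rather than inside it, so the void probability of the open disk under the (independent) Poisson process $\Phi$ is $e^{-\lambda\pi r^2}$.

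The decisive step is the change of variables $(x,y)\mapsto(Z,r,\psi)$ with $Z=Z_\theta(x,y)$, $r=r(x,y)$, and $\psi$ the polar angle of $x$ as seen from $Z$; then $x$ and $y$ lie on the circle of radius $r$ about $Z$ subtending the oriented angle $\theta$ there, so $y$ has polar angle $\psi-\theta$. The ordering constraint $x<y$ restricts $\psi$ to an arc of length $\pi$, while $r\in(0,\infty)$ and $Z\in\mathbb R^2$ are free. Evaluating the $4\times4$ Jacobian of $(Z_1,Z_2,r,\psi)\mapsto(x_1,x_2,y_1,y_2)$ gives
\[
dx\,dy \;=\; 4\,r\,\sin^2\tfrac{\theta}{2}\;dZ\,dr\,d\psi,
\]
which is exactly where the factor $\sin^2\frac{\theta}{2}$ enters. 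Substituting, integrating $Z$ over $B$, $\psi$ over its arc (a factor $\pi$), and using $\int_0^\infty 4r\,e^{-\lambda\pi r^2}\,dr = 2/(\lambda\pi)$, one gets $\gamma_\theta\,|B| = \lambda^2\cdot\frac{2}{\lambda\pi}\cdot\pi\cdot\sin^2\frac{\theta}{2}\cdot|B| = 2\lambda\sin^2\frac{\theta}{2}\,|B|$, as claimed. The only genuine obstacle is the geometric bookkeeping --- the ordering convention, the length-$\pi$ arc of admissible $\psi$, and the Jacobian evaluation; the probabilistic ingredients (Mecke, Poisson void probabilities) and the remaining one-dimensional integrals are routine. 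A useful consistency check: summing over \emph{all} ordered pairs instead would compute $\gamma_\theta+\gamma_{2\pi-\theta}$, and since $\sin^2\frac{\theta}{2}=\sin^2\frac{2\pi-\theta}{2}$ this merely doubles the answer, matching the fact that $\psi$ would then range over the full circle.
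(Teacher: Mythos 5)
Your argument is correct, and both the soft part (stationarity/ergodicity via the factor property, plus the local-finiteness remark) and the value of the intensity come out right; I checked the two computations you flag as decisive: the Jacobian of $(Z,r,\psi)\mapsto(x,y)$ is indeed $r\,\|u(\psi)-u(\psi-\theta)\|^2=4r\sin^2\frac{\theta}{2}$ after row reduction, and the ordering $x<y$ does cut $\psi$ down to an arc of length $\pi$ (the condition $\cos\psi<\cos(\psi-\theta)$ is $\sin(\psi-\theta/2)>0$ since $\sin(\theta/2)>0$ on $(0,2\pi)$). The route differs from the paper's in its bookkeeping rather than its substance. The paper first applies the mass transport principle to write $\gamma_\theta$ as $\lambda$ times a Palm expectation over the second nucleus $Y$, then uses Slivnyak and Campbell and integrates in polar coordinates centered at the nucleus $X=0$; there the radial variable is $\|Y-X\|$, the circumradius $R=\|Y-X\|/(2|\sin(\theta/2)|)$ sits inside the exponential, and the factor $\sin^2\frac{\theta}{2}$ emerges from the one-dimensional Gaussian integral. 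You instead compute $\mathbb E[\Psi_\theta(B)]$ directly with the two-point Mecke formula and change variables to $(Z,r,\psi)$ centered at the angular point itself, so the circumradius is the radial variable and $\sin^2\frac{\theta}{2}$ comes out of the Jacobian. Your version avoids the mass-transport/Palm detour and makes the ``intensity per unit area of $Z$'' structure explicit, at the price of a $4\times 4$ Jacobian; the paper's version keeps the integral one-dimensional from the start. Your consistency check (dropping the ordering doubles the answer, matching $\gamma_\theta=\gamma_{2\pi-\theta}$) is a nice touch that the paper handles implicitly through the half-plane restriction of the polar angle.
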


\begin{proof}
For all such $\theta$,
$\Psi_\theta$ is a factor point process of $\Phi$. It is hence stationary and mixing.

For all ordered points $X_1\ne X_2$ of $\Phi$, let $Z_{\theta}(X_1,X_2)$
be the point $Z$ that belongs to the bisector line of $(X_1,X_2)$ and is such that $\widehat{X_1,Z,X_2}=\theta$.
Let $B(x,r)$ be the open ball of center $x$ and radius $r$. The point $Z_{\theta}(X_1,X_2)$ 
is in the support of $\Psi_\theta$ if and only if
$\Phi(B(Z_{\theta}(X_1,X_2),\|Z_{\theta}(X_1,X_2)-X_1\|)=0$. 

Consider the following mass transport: send mass 1 from $X\in \Phi$ to $Z\in \Psi_\theta$
if there exists $Y\in \Phi$ such that $Y>X$, $Z=Z_\theta(X,Y)$ (so that $\widehat{X,Z,Y}=\theta$), and
$\Phi(B(Z,\|Z-Y\|)=0$.
Every point of $\Psi_\theta$ receives mass 1. Hence, by the mass transport principle,
\begin{eqnarray*}
\gamma_{\theta} & = & \lambda
\mathbb{P}_\Phi^0\left[\cup_{Y \in \Phi\setminus\{0\}}
\{\Phi\left(B\left(Z_{\theta}(0,Y), \|Z_{\theta}(0, Y)\|\right)\right) = 0\} \right]
\\ & = &
\mathbb{E}_\Phi^0\left[\sum_{Y \in \Phi\setminus\{0\}}
\boldsymbol{1}_{\Phi\left(B\left(Z_{\theta}(0,Y), \|Z_{\theta}(0, Y)\|\right)\right) = 0}
\right]
\\ & = &
\mathbb{E}\left[\sum_{Y \in \Phi}
\boldsymbol{1}_{\Phi\left(B\left(Z_{\theta}(0,Y), \|Z_{\theta}(0, Y)\|\right)\right) = 0}
\right],
\end{eqnarray*}
where $\mathbb{P}_\Phi^0$ denotes the Palm probability of $\Phi$ and where
the last equality follows from Slivnyak's theorem.
Using now Campbell's formula and
moving to polar coordinates, one gets 
\begin{align}
\gamma_{\theta} &= \pi \lambda^2 \int_{0}^{\infty}
\exp\left(-\lambda \pi R_{\theta, r}^2\right) r\mathrm{d}r ,
\end{align}
where the integration is only for polar angles from $-\pi/2$ to $\pi/2$ because
of the ordering assumption and
where $R_{\theta, r}=\frac {r}{2|\sin(\theta/2)|}$. It follows that
\begin{align}
\gamma_{\theta} &= \pi \lambda^2 \int_{0}^{\infty}
\exp\left(-\lambda \pi \frac{r^2}{4\sin^2 \frac{\theta}{2}}\right)
r\mathrm{d}r = 2\lambda \sin^2\frac{\theta}{2}.
\end{align}
\end{proof}

Here are a few direct corollaries of Lemma \ref{lem1}.
The first one is about the mean number of points of $\Psi_\theta$ in the
typical Voronoi cell:

\begin{corollary}
\label{cor01}
\begin{equation}
\mathbb{E}^0_\Phi[\Psi_\theta({\mathcal{V}}_0)]= 2 \sin^2\frac{\theta}{2}.
\end{equation}
\end{corollary}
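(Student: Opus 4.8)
The plan is to deduce this from Lemma~\ref{lem1} via the mass transport principle. The key structural fact is that the Voronoi cells of $\Phi$, once turned into a genuine partition of $\mathbb{R}^2$ by a translation-covariant tie-breaking rule — for instance, assigning each facet point to the smaller of its two Delaunay nuclei with respect to the order fixed in Section~\ref{sec:app} — tile the plane. With this convention, a.s.\ every point $Z$ of $\Psi_\theta$ lies in exactly one such (tie-broken) cell, namely $\mathcal{V}_{D_1}$, where $(D_1,D_2)$ with $D_1<D_2$ is the Delaunay pair of the facet carrying $Z$.

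I would then run the following transport between the jointly stationary pair $(\Phi,\Psi_\theta)$: send mass $1$ from $X\in\Phi$ to $Z\in\Psi_\theta$ whenever $Z$ lies in the tie-broken Voronoi cell of $X$. The mass sent out by the typical point of $\Phi$ has expectation $\mathbb{E}^0_\Phi[\Psi_\theta(\mathcal{V}_0)]$, while, by the paragraph above, the mass received by the typical point of $\Psi_\theta$ equals $1$ almost surely. The mass transport principle then yields $\lambda\,\mathbb{E}^0_\Phi[\Psi_\theta(\mathcal{V}_0)]=\gamma_\theta$, and substituting $\gamma_\theta=2\lambda\sin^2\frac{\theta}{2}$ from Lemma~\ref{lem1} gives $\mathbb{E}^0_\Phi[\Psi_\theta(\mathcal{V}_0)]=2\sin^2\frac{\theta}{2}$.

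The delicate point, and the one I expect to be the main obstacle, is precisely this boundary bookkeeping: every point of $\Psi_\theta$ sits on a one-dimensional facet, hence in the closure of two cells, so under the literal closed-cell definition of $\mathcal{V}_{X_i}$ it would be counted twice and one would obtain $4\sin^2\frac{\theta}{2}$. One therefore has to commit to a fixed covariant partition and to verify that a.s.\ no point of $\Psi_\theta$ is a Voronoi vertex, so that "belongs to exactly two closed cells" is the generic situation and the tie-breaking collapses it cleanly to one. This absolute-continuity fact follows from the same Slivnyak--Campbell computation underlying Lemma~\ref{lem1}: for fixed $X_1\ne X_2$, the event that $Z_\theta(X_1,X_2)$ is equidistant from some third point of $\Phi$ has probability zero. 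An alternative to the explicit transport above is to invoke directly the standard identity $\mathbb{E}^0_\Phi[\Psi_\theta(\mathcal{V}_0)]=\gamma_\theta/\lambda$ relating the intensity of a stationary point process to its mean count in the typical cell of a stationary partition indexed by $\Phi$; the verification needed is the same.
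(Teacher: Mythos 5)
Your argument is the same one the paper uses: the paper's entire proof of Corollary~\ref{cor01} is the transport ``send mass $1$ from each $X\in\Phi$ to each point of $\Psi_\theta$ belonging to $\mathcal{V}_X$,'' combined with the intensity $\gamma_\theta$ from Lemma~\ref{lem1}. What you add is the bookkeeping at the receiving end, and you are right that this is the only delicate step --- and right that it is not pedantic. Under the paper's literal definition of the cells (closed, with $\leq$), every point of $\Psi_\theta$ lies on a one-dimensional facet and hence in exactly two closed cells (your absolute-continuity remark correctly rules out Voronoi vertices a.s.), so it receives mass $2$, and the mass transport principle gives $\lambda\,\mathbb{E}^0_\Phi[\Psi_\theta(\mathcal{V}_0)]=2\gamma_\theta$, i.e.\ $4\sin^2\frac{\theta}{2}$; a direct Slivnyak--Campbell computation of $\mathbb{E}^0_\Phi\bigl[\sum_{Y\in\Phi\setminus\{0\}}\boldsymbol{1}_{Z_\theta\in\mathcal{V}_0}\bigr]$ over the \emph{whole} plane (not a half-plane) confirms this value. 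Your tie-broken partition recovers the stated $2\sin^2\frac{\theta}{2}$, but only by changing what $\Psi_\theta(\mathcal{V}_0)$ counts (each boundary point attributed to one of its two cells). Note that the corollary immediately following this one --- that $4$ of the $6$ facets of the typical cell contain the Delaunay midpoint --- is the case $\theta=\pi$ and is consistent with the closed-cell value $4\sin^2\frac{\pi}{2}=4$, not with $2$. So your ``main obstacle'' is precisely where a factor-of-$2$ ambiguity sits in the statement itself; your proof is internally consistent and correct for the convention you declare, but you should state explicitly that under the paper's own definition of $\mathcal{V}_0$ the constant would be $4\sin^2\frac{\theta}{2}$, and commit to one reading.
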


\begin{proof}
Consider the following mass transport:
send mass 1 from each point $X$ of $\Phi$ to each point of
$\Psi_\theta$ belonging to $\mathcal{V}_{X}$.
The formula then follows from the mass transport principle.
\end{proof}

The second one is:

\begin{corollary}
For a two-dimensional Poisson-Voronoi tessellation,
the mean number of one-dimensional facets of the typical
cell that contain (resp. do not contain) the middle 
point of the line segment joining the Delaunay neighbors which define the facet is equal to 4 (resp. 2).
\end{corollary}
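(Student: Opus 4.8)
The plan is to specialize Lemma~\ref{lem1} to the value $\theta=\pi$ and to combine the resulting intensity with the classical mean number of edges of the typical planar Poisson--Voronoi cell. The key preliminary observation is that, for a one-dimensional facet $S$ with Delaunay pair $(D_1,D_2)$, the point $Z_\pi(D_1,D_2)$ of the proof of Lemma~\ref{lem1} --- the point of the bisector of $(D_1,D_2)$ seeing $D_1$ and $D_2$ under angle $\pi$ --- is exactly the midpoint $M$ of the segment $[D_1,D_2]$, because a point of the bisector line subtends the angle $\pi$ at the two nuclei if and only if it is collinear with them. Moreover, by the characterization recalled in the proof of Lemma~\ref{lem1}, $M$ lies in the relative interior of $S$, i.e. in $\mathcal V_{D_1}\cap\mathcal V_{D_2}$, if and only if $\Phi\big(B(M,\|M-D_1\|)\big)=0$, that is, if and only if $M\in\Psi_\pi$.

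Consequently, the one-dimensional facets of a cell that contain the midpoint of their Delaunay edge are precisely the facets carrying a point of $\Psi_\pi$. Since each segment carries at most one point of $\Psi_\pi$ (the value $\theta=\pi$ being attained at most once per segment) and, almost surely, no point of $\Psi_\pi$ sits at a Voronoi vertex, the number of such facets of $\mathcal V_0$ coincides with the number of points of $\Psi_\pi$ lying on $\partial\mathcal V_0$. I would then evaluate $\mathbb E^0_\Phi[\Psi_\pi(\partial\mathcal V_0)]$ by a mass transport between $\Psi_\pi$ and $\Phi$: each point $Z\in\Psi_\pi$ sends mass $1$ to each of the two nuclei whose cells share the facet carrying $Z$. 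A typical point of $\Psi_\pi$ thus sends out mass $2$, while a typical nucleus $X$ receives mass equal to the number of facets of $\mathcal V_X$ carrying a point of $\Psi_\pi$, i.e. $\Psi_\pi(\partial\mathcal V_X)$. The mass transport principle gives $2\,\gamma_\pi=\lambda\,\mathbb E^0_\Phi[\Psi_\pi(\partial\mathcal V_0)]$, and since $\gamma_\pi=2\lambda\sin^2(\pi/2)=2\lambda$ by Lemma~\ref{lem1}, this yields $\mathbb E^0_\Phi[\Psi_\pi(\partial\mathcal V_0)]=4$, which is the announced mean number of facets of the typical cell that contain the midpoint of the associated Delaunay edge.

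To finish, I would invoke the classical fact that the typical cell of the planar Poisson--Voronoi tessellation has, on average, six one-dimensional facets; this follows from Euler's relation for the planar tessellation together with the almost sure $3$-valence of Voronoi vertices (see, e.g., \cite{Moller_1989}), or again from a short mass transport argument. By linearity of expectation, the mean number of facets of the typical cell that do \emph{not} contain the midpoint of their Delaunay edge is then $6-4=2$, completing the proof.

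The main obstacles are minor bookkeeping points rather than substantial difficulties: one must justify the almost sure genericity statements that allow one to identify "facet containing the Delaunay midpoint'' with "facet carrying a $\Psi_\pi$ point'' (no $\Psi_\pi$ point at a Voronoi vertex, at most one $\Psi_\pi$ point per facet), and one must keep track of the multiplicity $2$ (two adjacent cells per facet) correctly in the mass transport; both are standard for the Poisson--Voronoi tessellation.
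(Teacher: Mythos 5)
Your proof is correct and follows essentially the same route as the paper: identify $Z_\pi$ with the Delaunay midpoint, invoke the intensity $\gamma_\pi=2\lambda$ from Lemma~\ref{lem1}, and subtract from the classical mean of $6$ one-dimensional facets of the typical cell. The only difference is that the paper deduces the value $4$ directly from Corollary~\ref{cor01} (which gives $\mathbb{E}^0_\Phi[\Psi_\pi(\mathcal{V}_0)]=2$), leaving implicit the factor $2$ coming from each facet being shared by two cells, whereas your mass transport from $\Psi_\pi$ to $\Phi$ makes that factor explicit --- a more transparent bookkeeping of the same computation.
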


\begin{proof}
The result immediately follows from the last corollary and the fact
that the mean number of facets of the typical cell is equal to 6.
\end{proof}

\section{Distribution of Delaunay Angle on a Line}
\label{sec:lpp}

The setting is the same as above, with $\Phi$ a homogeneous Poisson point process
of intensity $\lambda$ on the Euclidean plane and its Voronoi tessellation.

Consider a straight line with a random orientation and distance to the origin, independent of $\Phi$.
Due to isotropy, one can assume that the line is the $x$-axis.
Let $\Upsilon \in \mathbb{R}$ denote the point process of Voronoi boundary crossings along this line.
The points of $\Upsilon$ are represented by ``crosses" in Figure~\ref{fig:da_fig}.
The linear intensity of $\Upsilon$ is well known to be $\mu = \frac{4\sqrt{\lambda}}{\pi}$
\cite{Muche_crossings}.
The point process is stationary (compatible with shifts along the $x$-axis).

Almost surely, each point of $\Upsilon$ belongs to a one-dimensional facet of the Voronoi
tessellation of $\Phi$. As such, one can associate to each point
$Z$ of $\Upsilon$ the two Delaunay neighbors $X_1(Z)$ and $X_2(Z)$
associated to the one-dimensional facet $Z$ belongs to.
These points are ordered using the above convention, with $X_1<X_2$. Let 
$$\Theta(Z)=\widehat{X_1(Z),Z,X_2(Z)}\in (0,2\pi).$$ 
These angles are depicted on Figure~\ref{fig:da_fig}.

By the same compatibility w.r.t. shifts along the $x$-axis,
the random variables $\{\Theta(Z)\}$ are marks of the point process $\Upsilon$. 
Thus the Palm distribution of $\Theta$ is well defined.
Note that this Palm distribution in question is w.r.t. the linear 
point process $\Upsilon$ rather than $\Phi$.

\begin{figure}[!h]
\centering
\includegraphics[scale=1.2]{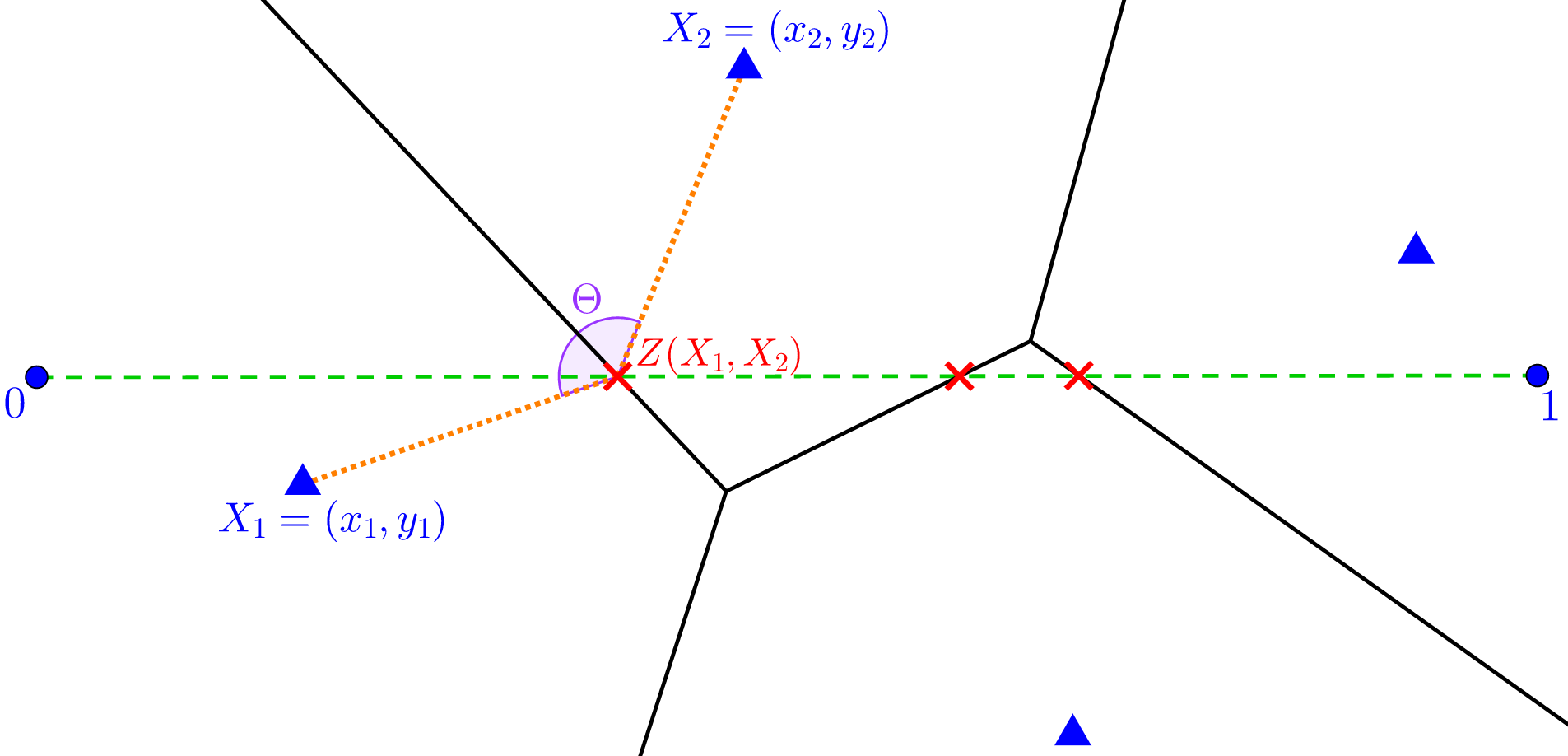}
\caption{Triangles denote the points of $\Phi$. Solid black lines denote the one-dimensional Voronoi facets. A cross denotes a point at the intersection of a Voronoi facet and the $x$-axis.}
\label{fig:da_fig}
\end{figure}

\begin{lemma}
\label{lem:lin}
The Palm distribution with respect to $\Upsilon$ of $\Theta=\Theta(0)$ has a density equal to
\begin{align}
f_{\Theta}(t) = \frac{1}{4} \sin \frac{t}{2}, \quad 0 < t < 2\pi.
\end{align}
\end{lemma}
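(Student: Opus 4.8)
\noindent\emph{Proof strategy.} The plan is to compute directly the intensity measure of the stationary marked point process $\sum_{Z\in\Upsilon}\delta_{(Z,\Theta(Z))}$ on $\mathbb R\times(0,2\pi)$, paralleling the known derivation of $\mu$, and then to read off $f_\Theta$ by dividing by $\mu=\tfrac{4\sqrt\lambda}{\pi}$. Since this marked point process is invariant under shifts along the $x$-axis, its intensity measure equals $\mu\,\mathrm dx\otimes f_\Theta(\theta)\,\mathrm d\theta$, so it suffices to evaluate, for nonnegative test functions $g$ on $\mathbb R$ and $h$ on $(0,2\pi)$,
\[
I(g,h):=\mathbb E\Big[\textstyle\sum_{Z\in\Upsilon}g(Z)\,h(\Theta(Z))\Big]=\mu\Big(\int_{\mathbb R}g\Big)\int_0^{2\pi}h(\theta)\,f_\Theta(\theta)\,\mathrm d\theta .
\]

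First I would reduce $\Upsilon$ to pairs of points of $\Phi$. For a.e.\ configuration, each $Z\in\Upsilon$ has a unique ordered Delaunay pair $X_1(Z)<X_2(Z)$, and conversely an ordered pair $X_1<X_2$ of points of $\Phi$ contributes a point of $\Upsilon$ iff the perpendicular bisector of $[X_1,X_2]$ meets the $x$-axis, say at $Z=(z,0)$ with $r:=\|Z-X_1\|=\|Z-X_2\|$, and iff $\Phi(B(Z,r))=0$ --- the latter being exactly the condition for $Z$ to lie on the one-dimensional facet of the pair. Applying the two-point Mecke formula together with Slivnyak's theorem (the two points $X_1,X_2$ added to $\Phi$ lie on $\partial B(Z,r)$ and so do not affect emptiness of the open ball) gives
\[
I(g,h)=\lambda^2\iint_{x_1<x_2}e^{-\lambda\pi r(x_1,x_2)^2}\,g\big(Z(x_1,x_2)\big)\,h\big(\widehat{x_1,Z(x_1,x_2),x_2}\big)\,\mathrm dx_1\,\mathrm dx_2 ,
\]
where $Z(x_1,x_2)$ and $r(x_1,x_2)$ are the crossing point and common distance associated with the pair.

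The key step is then a change of variables. Writing $Z=(z,0)$, $x_1=Z+r(\cos\alpha,\sin\alpha)$ and $x_2=Z+r(\cos\beta,\sin\beta)$, the map $(z,r,\alpha,\beta)\mapsto(x_1,x_2)$ is a bijection for a.e.\ configuration, and a short determinant computation gives $\mathrm dx_1\,\mathrm dx_2=r^2\,|\cos\beta-\cos\alpha|\,\mathrm dz\,\mathrm dr\,\mathrm d\alpha\,\mathrm d\beta$. With the ordering and orientation conventions of the note, $\widehat{x_1,Z,x_2}=(\alpha-\beta)\bmod 2\pi=:\theta\in(0,2\pi)$, and the constraint $x_1<x_2$ reads $\cos\beta-\cos\alpha>0$. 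Replacing $\beta$ by $\theta$ (unit Jacobian) and then setting $\phi:=\alpha-\theta/2\ (\mathrm{mod}\ 2\pi)$ turns $|\cos\beta-\cos\alpha|$ into $2\,|\sin\phi|\,\sin(\theta/2)$ --- here one uses $\sin(\theta/2)>0$ on $(0,2\pi)$ --- and turns the ordering constraint into $\sin\phi>0$, i.e.\ $\phi\in(0,\pi)$. Since neither $g$ nor $h$ depends on $r$ or $\phi$, the integral factorizes, and one is left with
\[
I(g,h)=\lambda^2\Big(\int_{\mathbb R}g\Big)\int_0^{2\pi}h(\theta)\,\sin\tfrac\theta2\,\Big(\int_0^\infty r^2e^{-\lambda\pi r^2}\mathrm dr\Big)\Big(\int_0^\pi 2\sin\phi\,\mathrm d\phi\Big)\mathrm d\theta .
\]
Evaluating the two elementary integrals ($\int_0^\pi 2\sin\phi\,\mathrm d\phi=4$ and $\int_0^\infty r^2e^{-\lambda\pi r^2}\mathrm dr=\tfrac1{4\pi\lambda^{3/2}}$) yields $\mu f_\Theta(\theta)=\tfrac{\sqrt\lambda}{\pi}\sin\tfrac\theta2$, and dividing by $\mu=\tfrac{4\sqrt\lambda}{\pi}$ gives $f_\Theta(\theta)=\tfrac14\sin\tfrac\theta2$. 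One checks it is a density, $\int_0^{2\pi}\tfrac14\sin\tfrac\theta2\,\mathrm d\theta=1$; and, as a consistency test, integrating $\mu f_\Theta$ over $(0,2\pi)$ recovers $\mu=\tfrac{4\sqrt\lambda}{\pi}$.

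The Poisson bookkeeping (Mecke and Slivnyak, already used in the proof of Lemma~\ref{lem1}) and the two Gaussian-type integrals are routine. The one step needing real care is the change of variables: getting the $4\times4$ Jacobian right and, above all, verifying that under the note's ordering and orientation conventions the Delaunay angle $\Theta$ is exactly the variable $\theta$ and that $X_1<X_2$ becomes the half-circle condition $\phi\in(0,\pi)$ on the auxiliary angle $\phi$.
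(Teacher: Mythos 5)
Your proposal is correct, and the computational core is genuinely different from the note's proof. Both arguments share the same probabilistic skeleton: reduce $\Upsilon$ to ordered Delaunay pairs, apply the two-point Mecke/Slivnyak identity, and use the empty-ball characterization to produce the factor $e^{-\lambda\pi r^2}$. Where you diverge is in the evaluation of the resulting four-dimensional integral. The note fixes $t\in(0,\pi)$, computes the complementary distribution function $\mathbb{P}_{\Upsilon}^{0}(\Theta\in(t,\pi))$ via a stretch-rotation change of variables in the Cartesian coordinates $(x_1,x_2,y_1,y_2)$, which leads to an error-function integral, and then differentiates and invokes symmetry to cover $(\pi,2\pi)$. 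You instead parametrize the ordered pair by the crossing point $z$, the common radius $r$, and the two angles $(\alpha,\beta)$ of the nuclei on $\partial B(Z,r)$; the Jacobian $r^2|\cos\beta-\cos\alpha|$ is correct, and after passing to $(\theta,\phi)=(\alpha-\beta,\alpha-\theta/2)$ it becomes $2r^2\sin\phi\sin\frac{\theta}{2}$ with the ordering constraint cleanly translated into $\phi\in(0,\pi)$. This factorizes the integral completely, produces the density on all of $(0,2\pi)$ in one pass with no error function and no separate symmetry step, and yields the known value $\mu=\frac{4\sqrt{\lambda}}{\pi}$ as a free consistency check rather than an external input (you only use it for the final normalization, and could even avoid that by normalizing $\sin\frac{\theta}{2}$ directly). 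The price is that the burden of correctness shifts to the $4\times 4$ Jacobian and to matching the orientation/ordering conventions, both of which you flag and which do check out: the determinant is $r^2(\cos\beta-\cos\alpha)$, the order $x_1<x_2$ on abscissae is exactly $\cos\beta>\cos\alpha$, and $\sin\frac{\theta}{2}>0$ on $(0,2\pi)$ makes the sign bookkeeping harmless.
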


\begin{proof}
Let $t$ be fixed with $t \in (0, \pi)$. 
Let $\Xi^+_t$ denote the sub-point process of $\Upsilon$ where only points
with an angular mark in $(t,\pi)$ are retained, that is
$$\Xi^+_t:= \sum_{Z\in \Upsilon} \delta_Z \mathbf{1}_{\Theta(Z)\in (t,\pi)}.$$
Since the selection of points of $\Upsilon$ which are retained to define $\Xi^+_t$ is based on marks,
the point process $\Xi^+_t$ is also stationary.
Let $\mu^+_{t}$ denote the (linear) intensity of $\Xi^+_t$.
By the definition of Palm probabilities, the two (linear) intensities 
$\mu^+_{t}$ and $\mu$ are related by the formula
$\mu^+_{t} = \mu \mathbb{P}_{\Upsilon}^{0}(\Theta(0) \in (t,\pi))$,
where $\mathbb{P}_{\Upsilon}^{0}(\cdot)$ denotes the Palm probability of $\Upsilon$. Thus
\begin{align}
\mathbb{P}_{\Upsilon}^{0}(\Theta(0) \in (t,\pi)) = \frac{\mu^+_{t}}{\mu}.
\end{align}

For all pairs $(X_1,X_2)$ of ordered points of $\Phi$, let $Z=Z(X_1,X_2)$ denote the intersection of the
bisector line of $(X_1,X_2)$ with the $x$-axis and $R=R(X_1,X_2)$ denote the
distance between $X_1$ and $Z(X_1,X_2)$. One has
\begin{eqnarray*}
\mu^+_{t} &  = &
\mathbb{E} \left[ \sum_{Z \in \Upsilon} \boldsymbol{1}_{Z\in [0,1]}
\boldsymbol{1}_{\Theta(Z) \in (t,\pi)} \right]
\nonumber \\
& = & \mathbb{E}\left[ \sum_{X_1 < X_2 \in \Phi}
\boldsymbol{1}_{Z(X_1,X_2)\in [0,1]}
\boldsymbol{1}_{Z(X_1,X_2) \in \mathcal{V}_{X_1}\cap \mathcal{V}_{X_2}}
\boldsymbol{1}_{\widehat{X_1,Z(X_1,X_2),X_2} \in (t,\pi)} \right].
\label{eq:three_ind}
\end{eqnarray*}
Using now the fact that the factorial moment measure of the Poisson point process of intensity
$\lambda$ is $\lambda^2 \mathrm{d}U_1\mathrm{d}U_2$,
where $\mathrm{d}U_i$, $i=1,2$ represents Lebesgue measure on $\mathbb{R}^2$, 
one gets that for all $t \in (0, \pi)$,
\begin{eqnarray*}
\mu_{t}^{+} &  = &
\lambda^2 \int_{U_1} \int_{U_2>U_1}
\mathbb{P}_{U_1,U_2}^0\left( Z(U_1,U_2) \in \mathcal{V}_{U_1}\cap \mathcal{V}_{U_2}\right)
\boldsymbol{1}_{Z(U_1,U_2)\in [0,1]}
\boldsymbol{1}_{\widehat{U_1,Z(U_1,U_2),U_2} \in (t,\pi)}  \mathrm{d}U_1 \mathrm{d}U_2,
\end{eqnarray*}
where $\mathbb{P}_{U_1,U_2}^0$ denotes the two point Palm probability of $\Phi$.
Let $U_1= (x_1, y_1)$ and $U_2 = (x_2, y_2)$.
The coordinates of $Z=Z(U_1,U_2)$ are
\begin{align}
Z = \left(\frac{1}{2}\frac{(x_2 - x_1)(x_2 + x_1) + (y_2 - y_1)(y_2 + y_1)}{x_2-x_1}, 0\right).
\label{eq:Z_coord}
\end{align}
Let also $R=R(U_1,U_2) = \|U_1 - Z\|$
and $r = \|U_1 - U_2\|$.
\begin{figure}[!h]
\centering
\includegraphics[scale=.8]{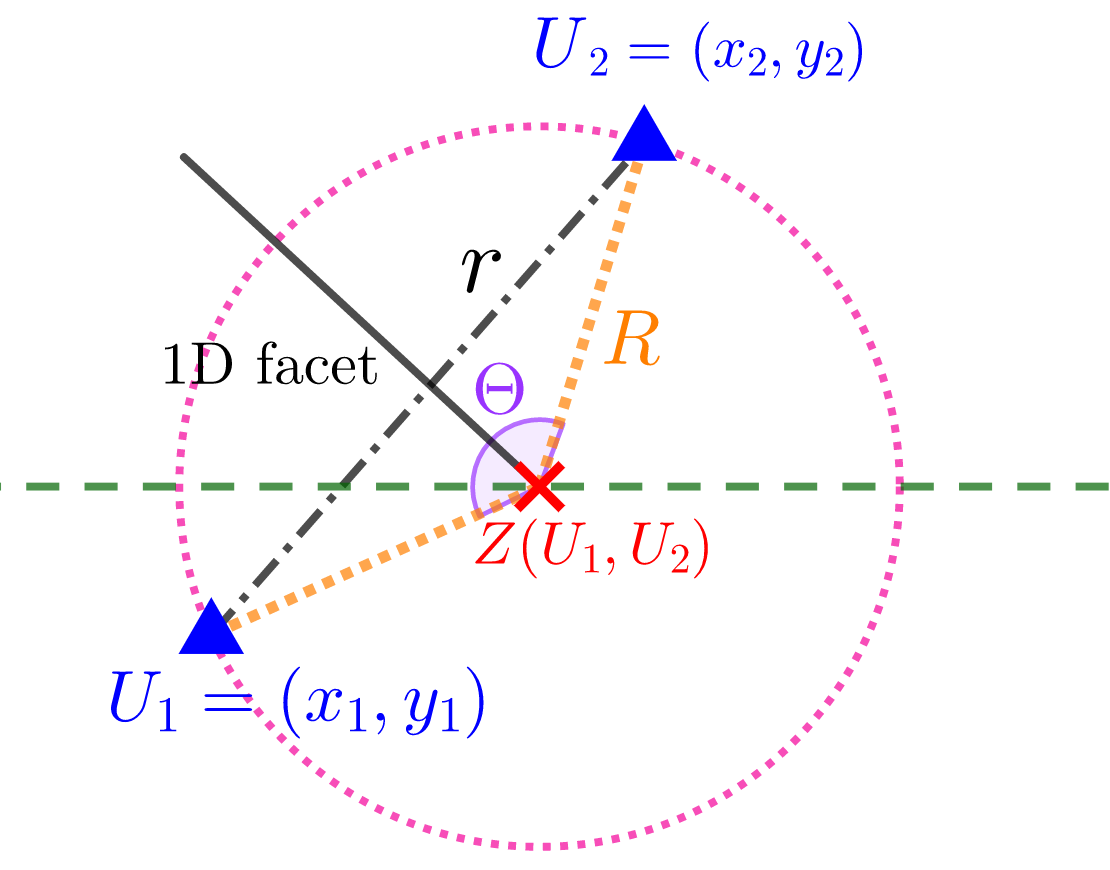}
\caption{$Z$ belongs to the one-dimensional facet of $U_1$ and $U_2$
if and only if there is no point of $\Phi$ within the open disk of radius $R$ that is centered at $Z$.}
\label{fig:null_fig}
\end{figure}

Using again the empty ball characterization of the Voronoi cell
(see Figure \ref{fig:null_fig}), one gets that for all $t\in(0,\pi)$,
\begin{eqnarray*}
\mu^+_{t} &  = & \frac 1 2
\lambda^2
\int_{U_1}\int_{U_2>U_1}\exp\left(-\lambda \pi R^2\right)
\boldsymbol{1}_{2\arcsin \frac{r}{2R} \in (t,\pi)}
\boldsymbol{1}_{Z \in [0, 1]}\mathrm{d}U_1 \mathrm{d}U_2,
\end{eqnarray*}
where the 1/2 comes from mirror symmetry w.r.t. $\pi$ and the fact that the integral (without the 1/2)
also counts the points $Z$ with an angle $\Theta$ in $(2\pi-t,2\pi)$.
So for all $t\in (0,\pi)$,
\begin{eqnarray*}
\mathbb{P}_{\Upsilon}^{0}(\Theta(0) \in (t,\pi)) 
 &  = & \frac 1 2
\frac {\lambda^2}{\mu}
\int_{U_1}\int_{U_2>U_1}\exp\left(-\lambda \pi R^2\right)
\boldsymbol{1}_{2\arcsin \frac{r}{2R} \in (t,\pi)}
\boldsymbol{1}_{{Z \in [0, 1]}}\mathrm{d}U_1 \mathrm{d}U_2\\
 &  = & \frac 1 4
\frac {\lambda^2}{\mu}
\int_{U_1}\int_{U_2}\exp\left(-\lambda \pi R^2\right)
\boldsymbol{1}_{2\arcsin \frac{r}{2R} \in (t,\pi)}
\boldsymbol{1}_{{Z \in [0, 1]}}\mathrm{d}U_1 \mathrm{d}U_2.
\end{eqnarray*}

The following stretch-rotation transformations are now used:
\begin{align*}
\begin{bmatrix} z_1 \\ z_2\end{bmatrix} = \begin{bmatrix} 1 & 1 \\ -1 & 1 \end{bmatrix} \begin{bmatrix} x_1 \\ x_2\end{bmatrix} \qquad \text{and} \qquad \begin{bmatrix} z_3 \\ z_4\end{bmatrix} = \begin{bmatrix} 1 & 1 \\ -1 & 1 \end{bmatrix} \begin{bmatrix} y_1 \\ y_2\end{bmatrix}.
\end{align*}

This yields ${\rm d}x_1 {\rm d}x_2 {\rm d}y_1 {\rm d}y_2=\frac{1}{4} {\rm d}z_1 {\rm d}z_2 {\rm d}z_3 {\rm d}z_4$. It follows that 
\begin{align*}
\mathbb{P}_{\Upsilon}^{0}(\Theta(0) \in (t,\pi)) &= \frac{\lambda^2}{16\mu} \int_{\mathbb{R}^4} \boldsymbol{1}_{\frac{1}{2}\left(z_1+\frac{z_3z_4}{z_2}\right) \in [0,1]} \times \boldsymbol{1}_{2\arcsin \sqrt{\frac{{z_2^2 + z_4^2}}{{z_2^2+z_4^2 + z_3^2\left(1 + \frac{z_4^2}{z_2^2}\right)}}} \in (t, \pi)} \nonumber \\
&\times e^{-\frac{\lambda \pi}{4}\left( z_2^2 + z_4^2 + z_3^2 \left(1+\frac{z_4^2}{z_2^2} \right) \right)} \, \mathrm{d}z_1\mathrm{d}z_2\mathrm{d}z_3\mathrm{d}z_4.
\end{align*}
Now the first indicator function can be eliminated by carrying out the integration
over $z_1$, where the condition for the first indicator function is met when 
\begin{align*}
0 \leq \frac{1}{2} \left(z_1+\frac{z_3z_4}{z_2}\right)
\leq 1, 
\end{align*}
or equivalently when $-\frac{z_3z_4}{z_2} \leq z_1 \leq 2- \frac{z_3z_4}{z_2}$,
which yields a factor of $2$. 

Polar coordinates, i.e., $(z_2,z_4)=(r\cos\phi,r\sin\phi)$
are now used to handle the second indicator function:
\begin{align}
\mathbb{P}_{\Upsilon}^{0}(\Theta(0) \in (t,\pi)) =\frac{\lambda^2}{8\mu}\int_{\mathbb{R}}{\rm d}z_3\int_0^\infty r\,{\rm d}r \int_0^{2\pi}{\rm d}\phi \, \boldsymbol{1}_{2\arcsin\frac{r}{\sqrt{r^2+\frac{z_3^2}{\cos^2\phi}}}\in (t, \pi)} \
e^{ -\frac{\lambda \pi}{4} \left( r^2 + \frac{z_3^2}{\cos^2\phi} \right)}.
\label{eq:int_z3}
\end{align}

The next step is to carry out the integration with respect to $z_3$,
which is twice the integral from $0$ to $\infty$.

Since $t<\pi$, the indicator function in the last integral
is equal to 1 on an interval with left limit $z_3^-=0$
(the argument of the arcsine is $1$ for $z_3=0$ and so the indicator is equal to
1 as $t \in (0,\pi)$) and with right limit obtained by solving 
\begin{align*}
\frac{r}{\sqrt{r^2+\frac{z_3^2}{\cos^2\phi}}}=\sin\frac{t}{2},
\end{align*}
namely $z_3^+=r|\cos \phi| \cot \frac{t}{2}$.
Hence, for $0<t<\pi$, one can write \eqref{eq:int_z3} as
\begin{align}
\mathbb{P}_{\Upsilon}^{0}(\Theta(0) \in (t,\pi))&= \frac{\lambda^{3/2}}{4\mu} \int_0^\infty r \, {\rm d}r \int_0^{2\pi} {\rm d}\phi \, |\cos\phi| \, \exp\left(-\frac{\lambda \pi r^2}{4}\right) \, {\rm erf}\left( \frac{r\sqrt{\lambda \pi} \, \cot \frac{t}{2}}{2} \right) \nonumber \\ 
&= \frac{\lambda^{3/2}}{\mu}  \int_0^\infty {\rm d}r \, r \, \exp\left(-\frac{\lambda \pi r^2}{4}\right) \, {\rm erf}\left( \frac{r\sqrt{\lambda \pi} \, \cot \frac{t}{2}}{2} \right) \nonumber \\ 
&= \frac{2\lambda \cot \frac{t}{2}}{\pi \mu} \int_0^\infty {\rm d}r \, \exp\left(-\frac{\lambda \pi r^2}{4\sin^2 \frac{t}{2}} \right) \nonumber \\
&=\frac{2 \sqrt{\lambda} \cos \frac{t}{2}}{\pi \mu} \nonumber \\
&\overset{(\rm a)}{=} \frac 1 2 \cos \frac{t}{2},
\label{eq:ccdf_ang}
\end{align}
where ${\rm erf}(x) = \frac{2}{\sqrt{\pi}} \int_{0}^{x} e^{-u^2}\mathrm{d}u$ is the error function
and $({\rm a})$ follows from the fact that $\mu = \frac{4 \sqrt{\lambda}}{\pi}$.

The pdf of $\Theta$ on $(0,t)$ follows by differentiating \eqref{eq:ccdf_ang}
with respect to $t$, which yields
\begin{align}
f_{\Theta}(t) = \frac{1}{4}\sin \frac{t}{2}, \quad t \in (0, \pi).
\label{eq:pdf_ang-2d}
\end{align}
The expression for the density of $\Theta$ in $(\pi,2\pi)$ follows by symmetry.
\end{proof}

\section{The Three-Dimensional Case}
\label{sec:3d}
In this section, $\Phi$ is a stationary Poisson point process of intensity $\lambda$
in $\mathbb{R}^3$ and $\nu(3)=\frac{4\pi}{3}$ denotes the volume of the unit sphere in dimension three.

\subsection{The subset of a facet seeing a given angle}

Consider a two-dimensional facet $F$ of the Voronoi tessellation of $\Phi$. 
Let $X_1$ and $X_2$ be the two nuclei creating $F$.
Let $C$ be the intersection point of the segment $[X_1,X_2]$ and the bisector
plane $P$ of this segment. Note that although $F\subset P$, $C$ does not necessarily belong to $F$.
For all $\theta\in (0,2\pi)$, the set ${\mathcal {Z}}_\theta(X_1,X_2)$ of points 
of $F$ that see the two nuclei $X_1$ and $X_2$ with angle $\theta$
is a random closed subset of $F$.
Here the angle is measured in the plane that contains $X_1,X_2$, and $Z$.
The set ${\mathcal {Z}}_\theta(X_1,X_2)$ is actually
the intersection of facet $F$ with the circle of center $C$ and radius $\rho$
in plane $P$, with
$$\rho:=\frac{||X_1-X_2||}{2} \left|\cot\frac {\theta}{ 2}\right|.$$

In the two-dimensional case, Corollary \ref{cor01} gives a formula
for the mean number of facets of the typical cell that contain
a point seeing the nucleus of the typical cell and the other nucleus defining the facet with angle $\theta$. 
More precisely, if $Z_\theta(X,Y)$ be the point of the bisector line of $[X,Y]$
that sees the pair $(X,Y)$ with angle $\theta\in (0,2\pi)$, then this corollary says that
\begin{equation}
\mathbb{E}^0_\Phi\left[\sum_{X\in \Phi,\ X\ne 0}
\boldsymbol{1}_{Z_\theta(0,X)\in \mathcal{V}_0} \right]=
2 \sin^2\frac{\theta}{2}.
\end{equation}
The three-dimensional analogue of the question considered 
in that corollary is about the Palm expectation of
the mean length, say $L_\theta$ of the set of loci of the facets of the typical Voronoi cell that
see the nucleus of this cell and the other nucleus defining the facet with angle $\theta$.
This analogue is evaluated in the following lemma:

\begin{lemma}
For all $\theta\in (0,2\pi)$, with $\theta \ne \pi$,
\begin{equation}
L_\theta:=\mathbb{E}^0_\Phi\left[\sum_{X\in \Phi,\ X\ne 0}l_1({\mathcal Z}_\theta(0,X))\right]=
4 \pi \left(\frac{6} {\pi \lambda}\right)^{\frac 1 3}
\Gamma\left(\frac 4 3\right)
\left|\cos\frac {\theta}{ 2}\right|
\sin^3\frac {\theta}{ 2},
\end{equation}
where $l_1$ denotes length.
\end{lemma}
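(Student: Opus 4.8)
The plan is to mimic the mass-transport argument behind Corollary~\ref{cor01}, replacing the counting of points by the accumulation of length along the loci ${\mathcal Z}_\theta(0,X)$. Concretely, I would send mass $l_1({\mathcal Z}_\theta(X,Y)\cap \mathcal{V}_X)$ from $X\in\Phi$ to $Y\in\Phi$ for each ordered (or unordered, with a factor) Delaunay-type pair. Summing the received mass and using the mass transport principle, $L_\theta$ equals $\lambda$ times the Palm expectation over $\Phi^0$ of the total length of all arcs ${\mathcal Z}_\theta(0,X)$ that actually lie on facets; but a point $z$ of the circle of center $C(0,X)$ and radius $\rho(0,X)$ in the bisector plane $P$ of $[0,X]$ lies on a facet if and only if the open ball $B(z,\|z\|)$ is empty of points of $\Phi$. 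By Slivnyak's theorem this turns into an unconditional expectation, and then Campbell's formula for the factorial moment measure $\lambda^2\,\mathrm{d}U\,\mathrm{d}z$ (parametrizing a point $X=U$ of $\Phi$ and a point $z$ on the relevant circle) converts $L_\theta$ into an explicit integral
\begin{align*}
L_\theta = \lambda^2 \int \exp\!\left(-\lambda \nu(3)\, \|z\|^3\right)\, \mathbf{1}_{z\in {\mathcal Z}_\theta(0,U)}\ \mathrm{d}(\text{arc length in }z)\,\mathrm{d}U .
\end{align*}

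The key geometric bookkeeping is to parametrize cleanly. Fix the direction of $U$; write $\|0-U\|=2a$ so that $C$ is at distance $a$ from the origin along that direction, the circle ${\mathcal Z}_\theta$ has radius $\rho=a\,|\cot(\theta/2)|$ in the plane $P\perp (0-U)$ through $C$, and a generic point $z$ on it satisfies $\|z\|^2=a^2+\rho^2=a^2/\sin^2(\theta/2)$, which is \emph{independent of the position of $z$ on the circle}. This is the crucial simplification: the exponential empty-ball factor $\exp(-\lambda\nu(3)\|z\|^3)=\exp(-\lambda\nu(3) a^3/\sin^3(\theta/2))$ comes out of the arc-length integral, leaving just the circumference $2\pi\rho = 2\pi a\,|\cot(\theta/2)|$. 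So after integrating the arc-length variable and the angular part of $U$ (a factor $4\pi/2 = 2\pi$ from the half-sphere of directions, since ordering halves it, times the circle contributes another $2\pi$ — the constants must be tracked carefully), one is left with a one-dimensional radial integral of the form $\int_0^\infty \exp(-\lambda\nu(3)a^3/\sin^3(\theta/2))\,a^{?}\,\mathrm{d}a$, where the power of $a$ counts: $a^2$ from the volume element $\mathrm{d}U$ in $\mathbb{R}^3$, times $a$ from the circumference $2\pi\rho$.

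Evaluating $\int_0^\infty a^3 \exp(-c\,a^3)\,\mathrm{d}a$ via the substitution $u=c\,a^3$ produces $\frac{1}{3}c^{-4/3}\Gamma(4/3)$, and plugging $c=\lambda\nu(3)/\sin^3(\theta/2)=\frac{4\pi\lambda}{3\sin^3(\theta/2)}$ gives the $(\,6/(\pi\lambda)\,)^{1/3}$ factor together with $\sin^4(\theta/2)$; one power of $\sin(\theta/2)$ is then cancelled against the $|\cot(\theta/2)|=|\cos(\theta/2)|/\sin(\theta/2)$ coming from $\rho$, leaving $|\cos(\theta/2)|\sin^3(\theta/2)$ as claimed, with the remaining numerical constant collapsing to $4\pi\,\Gamma(4/3)$. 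The main obstacle I anticipate is not any single step but getting \emph{all} the multiplicative constants right — the $\tfrac12$ from ordering the pair, the $4\pi$ from the direction sphere, the $2\pi$ from the circle, the $\nu(3)=\tfrac{4\pi}{3}$, and the $\tfrac13$ from the Gamma substitution — so I would double-check the constant by a sanity limit (e.g. comparing the $\theta\to\pi$ behavior, where ${\mathcal Z}_\theta$ degenerates to the single midpoint, against Corollary~\ref{cor01}'s two-dimensional analogue) and by verifying homogeneity in $\lambda$ (the answer must scale as $\lambda^{-1/3}$, which it does). The exclusion $\theta\ne\pi$ is exactly the degenerate case $\rho=0$ where the locus is a point of zero length, consistent with the formula vanishing there.
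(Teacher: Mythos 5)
Your proposal follows essentially the same route as the paper's proof: a Slivnyak/Campbell reduction of the Palm expectation, the empty-ball probability $\exp(-\lambda \nu(3) R^3)$ with $R=\|z\|=a/\sin(\theta/2)$ constant along the circle so that the exponential factors out and only the circumference $2\pi\rho=2\pi a|\cot(\theta/2)|$ survives, and finally the radial integral $\int_0^\infty a^3 e^{-ca^3}\,\mathrm{d}a=\tfrac13 c^{-4/3}\Gamma(4/3)$ with the cancellation $|\cot(\theta/2)|\sin^4(\theta/2)=|\cos(\theta/2)|\sin^3(\theta/2)$. The one slip is the prefactor in your displayed integral: since $z$ ranges over a deterministic circle determined by $(0,U)$ rather than being a second Poisson point, the correct measure is $\lambda\,\mathrm{d}U$ times arc length, not the two-point factorial moment measure $\lambda^2\,\mathrm{d}U\,\mathrm{d}z$ --- with $\lambda^2$ the integral would scale as $\lambda^{2/3}$ instead of the $\lambda^{-1/3}$ your own homogeneity check requires, whereas with the single $\lambda$ your bookkeeping does collapse to the stated constant $4\pi\,\Gamma(4/3)$.
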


\begin{proof}
By Slivnyak's theorem and Campbell's formula,
\begin{eqnarray}
\mathbb{E}^0_\Phi\left[\sum_{X\in \Phi,\ X\ne 0}l_1(Z_\theta(0,X))\right]
& = & 
\mathbb{E}\left[\sum_{X\in \Phi}l_1({\mathcal Z}_\theta((0,X),\Phi+\delta_0))\right]\nonumber \\
& = & \lambda 
\int_{x\in\mathbb{R}^3}
\boldsymbol{1}_{x<0} 
\int_{t=0}^{2\pi} \mathbb{P}_\Phi\left(Z_\theta(t,(0,x),\Phi+\delta_0+\delta_x)\in \mathcal{V}_0\right)
\mathrm{d} x
\mathrm{d} t,
\end{eqnarray}
where $Z_\theta(t,(0,x),\Phi+\delta_0+\delta_x)$
denotes the point of plane $P$ that is at the intersection of the circle of center $C$ and radius $\rho$
in this plane and the line of this plane containing $C$ and with direction $t$. 
Using now isotropy, one gets that
\begin{eqnarray}
\mathbb{E}^0_\Phi\left[\sum_{X\in \Phi,\ X\ne 0}l_1(Z_\theta(0,X))\right]
& = & 2\pi \lambda
\int_{x\in\mathbb{R}^3}
\boldsymbol{1}_{x<0} \rho(x)
\mathbb{P}_\Phi\left(Z_\theta(0,(0,x),\Phi+\delta_0+\delta_x)\in \mathcal{V}_0\right)
\mathrm{d} x\nonumber\\
& = & \pi \lambda
\int_{x\in\mathbb{R}^3}\rho(x)
\mathbb{P}_\Phi\left(Z_\theta(0,(0,x),\Phi+\delta_0+\delta_x)\in \mathcal{V}_0 \right)
\mathrm{d} x\nonumber\\
& = & \pi \lambda
\int_{x\in\mathbb{R}^3}\rho(x)
\exp(-\lambda \nu(3) R(x)^3) \mathrm{d} x,
\end{eqnarray}
with $\rho(x)$ defined as above, namely
$$\rho(x):=\frac{||x||}{2} \left|\cot\frac {\theta}{ 2}\right|,$$
and $R(x)$ the distance between $x$ and $Z$, namely
$$R(x):=\frac{||x||}{2} \frac{1} {\sin\frac {\theta}{ 2}}.$$
Passing to spherical coordinates, one gets
\begin{eqnarray}
\mathbb{E}^0_\Phi\left[\sum_{X\in \Phi,\ X\ne 0}l_1(Z_\theta(0,X))\right]
& = & 2 \pi^2 \lambda \left|\cot\frac {\theta}{ 2}\right|
\int_{r>0} \exp\left(-\lambda \pi \frac{r^3}{6} \frac{1} {\sin^3 
\frac {\theta}{ 2}}\right) r^3 \mathrm{d} r\nonumber\\
& = & 
4 \pi \left(\frac{6} {\pi \lambda}\right)^{\frac 1 3}
\Gamma\left(\frac 4 3\right)
\left|\cos\frac {\theta}{ 2}\right|
\sin^3\frac {\theta}{ 2}.
\end{eqnarray}
\end{proof}

The last result was for $\theta\ne \pi$.
For $\theta=\pi$, one should rather consider the point process $\Xi$ of points that belong to some facet 
and that are the middle points of the segment $[X_1,X_2]$ of the
Delaunay neighbors associated with this facet. The result is:

\begin{lemma}
\begin{equation}
N_\pi:= \mathbb{E}^0_\Phi\left[\Xi({\mathcal V}_0)\right]=8.
\end{equation}
\end{lemma}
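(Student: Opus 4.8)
The plan is to run the same Slivnyak/mass-transport computation that gave Corollary~\ref{cor01} and the preceding three-dimensional lemma, now specialized to $\theta=\pi$. The reason $\theta=\pi$ needs a separate treatment is that here the radius $\rho=\frac{\|X_1-X_2\|}{2}\,|\cot(\pi/2)|$ equals $0$, so the locus $\mathcal Z_\theta(X_1,X_2)$ collapses to the single midpoint $C=(X_1+X_2)/2$; hence $\Xi$ is an honest point process, the relevant quantity is a count rather than a length, and the $L_\theta$ formula (which carries the factor $\rho$) degenerates to $0$.

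First I would write $\Xi(\mathcal V_0)$ explicitly under the Palm probability $\mathbb P^0_\Phi$. Under $\mathbb P^0_\Phi$ the origin is a point of $\Phi$, and almost surely every point of $\Xi$ lies in the relative interior of a two-dimensional facet, hence has exactly two equidistant nearest nuclei. Therefore a point $Z\in\Xi$ belongs to the closed cell $\mathcal V_0$ if and only if $0$ is one of the two nuclei defining the facet through $Z$, say the pair $\{0,X\}$ with $X\in\Phi\setminus\{0\}$; in that case $Z=X/2$. By the empty-ball characterization of the facet (the nuclei $0$ and $X$ lying on the bounding sphere of the ball in question), $X/2$ is such a point exactly when $\Phi\bigl(B(X/2,\|X\|/2)\bigr)=0$. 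Since distinct $X$ produce distinct midpoints, this gives, with no omissions or repetitions,
\[
\Xi(\mathcal V_0)=\sum_{X\in\Phi\setminus\{0\}}\mathbf 1\bigl\{\Phi\bigl(B(X/2,\|X\|/2)\bigr)=0\bigr\}.
\]

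Next I would remove the Palm conditioning. By Slivnyak's theorem and Campbell's formula, and since $0$ and $X$ lie on the bounding sphere of $B(X/2,\|X\|/2)$ and therefore do not affect the empty-ball event,
\[
N_\pi=\mathbb E\Bigl[\sum_{X\in\Phi}\mathbf 1\bigl\{\Phi\bigl(B(X/2,\|X\|/2)\bigr)=0\bigr\}\Bigr]=\lambda\int_{\mathbb R^3}e^{-\lambda\nu(3)(\|x\|/2)^{3}}\,\mathrm{d}x.
\]
Passing to spherical coordinates, using $\nu(3)=\frac{4\pi}{3}$, and substituting $u=\lambda\pi r^{3}/6$,
\[
N_\pi=4\pi\lambda\int_{0}^{\infty}e^{-\lambda\pi r^{3}/6}\,r^{2}\,\mathrm{d}r=4\pi\lambda\cdot\frac{2}{\lambda\pi}=8.
\]

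The change of variables and the Campbell computation are routine. The step that genuinely needs care, and where I would expect a referee to push back, is the first one: the almost-sure facts that every point of $\Xi$ lies in the relative interior of a facet (so no point of $\Xi$ sits on an edge or a vertex), and that $Z\in\mathcal V_0$ is equivalent to $0$ being one of the two nuclei of the facet carrying $Z$, so that the displayed decomposition of $\Xi(\mathcal V_0)$ is exact. These are standard genericity properties of the Poisson-Voronoi tessellation (in particular, no point of $\Phi$ lies on the relevant spheres), but they should be invoked explicitly rather than taken for granted.
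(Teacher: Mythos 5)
Your proof is correct and follows essentially the same route as the paper: reduce $\Xi(\mathcal V_0)$ to a sum of empty-ball indicators over the Delaunay neighbors of $0$, apply Slivnyak and Campbell, and evaluate the resulting spherical integral. The only cosmetic difference is that you sum over all $X\in\Phi\setminus\{0\}$ directly to get $8$, whereas the paper integrates over ordered pairs (obtaining $4$) and then doubles by symmetry; your explicit flagging of the genericity assumptions is a welcome addition.
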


\begin{proof}
By the same arguments as above, the mean number of points of the cell of 0 that
see the two ordered nuclei $(0,X)$ with angle $\theta=\pi$ is
\begin{eqnarray*}
M_\pi
& = & \frac {\lambda} {2}
\int_{x\in\mathbb{R}^3}
\exp\left(-\lambda \nu(3) \left(\frac{||x||}{2}\right)^3\right) \mathrm{d} x\\
& = & 2 \pi \lambda \int_{r>0} \exp\left(-\lambda \pi \frac 1 6 r^3\right) r^2 \mathrm{d} r\\
& = & 4.
\end{eqnarray*}
The result follows by symmetry.
\end{proof}

\subsection{The angles seen from a line}

The problem considered in Section \ref{sec:lpp} has a direct extension in dimension three,
where the question is again that of the Palm distribution of the angle $\Theta$ at which the 
intersections of the $x$-axis with the $2$-dimensional facets of the Voronoi tessellation of
a Poisson point process in $\mathbb{R}^3$ see the two nuclei creating the facet.

\begin{lemma}
\label{lem:lin3}
The Palm distribution with respect to $\Upsilon$ of $\Theta=\Theta(0)$ has the density
\begin{align}
f_{\Theta}(t) = \frac{3}{4} \left|\cos \frac{t}{2}\right| \sin^{2}\frac{t}{2}, \quad t\in (0, 2\pi).
\end{align}
\end{lemma}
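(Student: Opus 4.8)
The plan is to follow the proof of Lemma~\ref{lem:lin} step by step, replacing the two‑dimensional ingredients by their three‑dimensional analogues. Fix $t\in(0,\pi)$, let $\Xi^+_t$ be the stationary thinning of $\Upsilon$ retaining the points with angular mark in $(t,\pi)$, and let $\mu^+_t$ be its linear intensity, so that $\mathbb{P}^0_\Upsilon(\Theta(0)\in(t,\pi))=\mu^+_t/\mu$. Writing $\mu^+_t=\mathbb{E}\big[\sum_{Z\in\Upsilon}\boldsymbol{1}_{Z\in[0,1]}\boldsymbol{1}_{\Theta(Z)\in(t,\pi)}\big]$ as a sum over the ordered Delaunay pairs $(X_1,X_2)$ of $\Phi$ whose bisector plane meets the $x$-axis at a point $Z(X_1,X_2)$ lying on the shared facet, and then invoking the factorial moment measure $\lambda^2\mathrm{d}U_1\mathrm{d}U_2$ on $(\mathbb{R}^3)^2$, Slivnyak's theorem, and the empty‑ball characterization of the Voronoi cell, one obtains
\begin{equation*}
\mathbb{P}^0_\Upsilon(\Theta(0)\in(t,\pi))=\frac{\lambda^2}{4\mu}\int_{\mathbb{R}^3}\int_{\mathbb{R}^3}e^{-\lambda\nu(3)R^3}\,\boldsymbol{1}_{2\arcsin\frac r{2R}\in(t,\pi)}\,\boldsymbol{1}_{Z\in[0,1]}\,\mathrm{d}U_1\mathrm{d}U_2,
\end{equation*}
where $Z=Z(U_1,U_2)$ is the intersection of the bisector plane of $[U_1,U_2]$ with the $x$-axis, $R=\|U_1-Z\|=\|U_2-Z\|$ and $r=\|U_1-U_2\|$. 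The only differences with Lemma~\ref{lem:lin} are that $\exp(-\lambda\pi R^2)$ becomes $\exp(-\lambda\nu(3)R^3)$ and that $U_1,U_2$ now range over $\mathbb{R}^3$; the angular indicator keeps its form because $r=2R\sin(\Theta/2)$ still holds in the plane of $U_1,Z,U_2$, and the prefactor $\tfrac14$ again combines the factor $\tfrac12$ by which the unsigned‑angle indicator accounts for both orientations with the factor $\tfrac12$ from removing the ordering constraint.

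Next I would change variables. With $U_i=(x_i,y_i,w_i)$, apply the stretch--rotation $(x_1,x_2)\mapsto(x_1+x_2,x_2-x_1)$ and likewise to $(y_1,y_2)$ and $(w_1,w_2)$, with Jacobian $\tfrac18$. Write $a_1,a_2$ for the images of the $x$-coordinates and set $\vec v_1=(y_1+y_2,w_1+w_2)$, $\vec v_2=(y_2-y_1,w_2-w_1)$. A direct computation gives $Z=\tfrac12\big(a_1+(\vec v_1\cdot\vec v_2)/a_2\big)$, $r^2=a_2^2+|\vec v_2|^2$ and
\begin{equation*}
4R^2=r^2+|\vec v_1|^2+\frac{(\vec v_1\cdot\vec v_2)^2}{a_2^2}.
\end{equation*}
Since $R$ does not depend on $a_1$, integrating $a_1$ against $\boldsymbol{1}_{Z\in[0,1]}$ yields a factor $2$ and leaves a five‑dimensional integral over $(a_2,\vec v_1,\vec v_2)$ in which the angular indicator now reads $|\vec v_1|^2+(\vec v_1\cdot\vec v_2)^2/a_2^2<r^2\cot^2\tfrac t2$.

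The hard part is evaluating this five‑dimensional integral, whose integrand genuinely couples $\vec v_1$ and $\vec v_2$ through the term $(\vec v_1\cdot\vec v_2)^2/a_2^2$, which has no analogue in dimension two. I would proceed as follows: pass to polar coordinates in each planar variable, $\vec v_i=\rho_i(\cos\alpha_i,\sin\alpha_i)$, so that $\vec v_1\cdot\vec v_2=\rho_1\rho_2\cos\psi$ with $\psi=\alpha_1-\alpha_2$, the global rotation integrating out to $2\pi$; then substitute $u=\rho_1^2(1+(\rho_2^2/a_2^2)\cos^2\psi)/r^2$, which turns the angular indicator into $u<\cot^2\tfrac t2$, makes $4R^2=r^2(1+u)$, and leaves $\psi$ only in the Jacobian factor $(1+(\rho_2^2/a_2^2)\cos^2\psi)^{-1}$; carry out the $\psi$-integral via $\int_0^{2\pi}\mathrm{d}\psi/(1+\beta\cos^2\psi)=2\pi/\sqrt{1+\beta}$, which produces exactly the factor $|a_2|/r$; pass to polar coordinates $(r,\phi)$ in the half‑plane $\{(a_2,\rho_2):\rho_2\ge0\}$, where $\int_0^\pi|\cos\phi|\sin\phi\,\mathrm{d}\phi=1$; and finish with the Gamma integral $\int_0^\infty r^4 e^{-\frac{\lambda\pi}6(1+u)^{3/2}r^3}\mathrm{d}r=\frac{\Gamma(5/3)}{3(\lambda\pi/6)^{5/3}(1+u)^{5/2}}$ together with the elementary integral $\int_0^{\cot^2(t/2)}(1+u)^{-5/2}\mathrm{d}u=\tfrac23\big(1-\sin^3\tfrac t2\big)$, using $1+\cot^2\tfrac t2=\csc^2\tfrac t2$.

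Assembling the constants gives $\mathbb{P}^0_\Upsilon(\Theta(0)\in(t,\pi))=C\big(1-\sin^3\tfrac t2\big)$ for $t\in(0,\pi)$, with $C$ an explicit multiple of $\lambda^{1/3}/\mu$. One then fixes $C=\tfrac12$, either by inserting the known linear intensity $\mu$ of the line--facet intersection process of a three‑dimensional Poisson--Voronoi tessellation (the analogue of step $(\mathrm a)$ in Lemma~\ref{lem:lin}), or simply by the reflection symmetry $\Theta\mapsto 2\pi-\Theta$, which forces $\mathbb{P}^0_\Upsilon(\Theta(0)\in(0,\pi))=\tfrac12$. Since then $\mathbb{P}^0_\Upsilon(\Theta(0)\le t)=\tfrac12-\tfrac12\big(1-\sin^3\tfrac t2\big)=\tfrac12\sin^3\tfrac t2$ on $(0,\pi)$, differentiating gives $f_\Theta(t)=\tfrac34\cos\tfrac t2\sin^2\tfrac t2$ there, and the expression on $(\pi,2\pi)$, with the absolute value, follows by symmetry.
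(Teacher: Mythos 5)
Your proposal is correct, and it follows the paper's strategy up to and including the stretch--rotation change of variables (your $a_1,a_2,\vec v_1,\vec v_2$ are exactly the paper's $u_1,\dots,u_6$, and your expressions for $Z$, $r^2$ and $4R^2$ match the paper's). Where you genuinely diverge is in the evaluation of the remaining five-dimensional integral. The paper puts spherical coordinates on $(u_2,u_4,u_6)$ and polar coordinates on $(u_3,u_5)$, which forces it to carry the unwieldy function $a(\psi,\varphi,\phi)^2=\bigl(1-(\sin\psi\sin\phi\sin\varphi-\cos\psi\cos\phi)^2\bigr)/(\sin^2\psi\cos^2\varphi)$ through the $\rho$- and $r$-integrations and then to evaluate a three-fold angular integral of $1/a^2$ at the end. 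You instead exploit the rotational invariance about the $x$-axis by putting independent polar coordinates on $\vec v_1$ and $\vec v_2$, reducing the angular dependence to the single relative angle $\psi$, and the substitution $u=\rho_1^2\bigl(1+(\rho_2^2/a_2^2)\cos^2\psi\bigr)/r^2$ together with $\int_0^{2\pi}\mathrm{d}\psi/(1+\beta\cos^2\psi)=2\pi/\sqrt{1+\beta}$ collapses the coupling term cleanly; I checked that your constants assemble to $C=1/2$ as claimed. Your final normalization is also a nice improvement in robustness: rather than relying solely on the closed form $\mu(3)=(4\pi/3)^{1/3}\Gamma(5/3)\lambda^{1/3}$ as the paper does, you observe that the reflection symmetry $\Theta\mapsto 2\pi-\Theta$ forces $\mathbb{P}^0_\Upsilon(\Theta\in(0,\pi))=\tfrac12$, which pins down $C$ (and, as a byproduct, re-derives $\mu(3)$). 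Both routes give the same density, but yours trades the paper's brute-force angular integral for a symmetry argument and is the easier one to verify.
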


\begin{proof}
By the same arguments as in the two-dimensional case, for all $t\in (0,\pi)$,
\begin{eqnarray*}
\mathbb{P}_{\Upsilon}^{0}(\Theta(0) \in (t,\pi)) 
&  = &
\frac {\lambda^2}{4 \mu(3)}
\int_{U_1\in \mathbb{R}^3}\int_{U_2\in \mathbb{R}^3}\exp\left(-\lambda \nu(3) R^3\right)
\boldsymbol{1}_{2\arcsin \frac{r}{2R} \in (t, \pi)}
\boldsymbol{1}_{{Z \in [0, 1]}}\mathrm{d}U_1 \mathrm{d}U_2,
\end{eqnarray*}
with $\mu(3) = \left(4\pi/3\right)^{1/3}\Gamma(5/3)\lambda^{1/3}$
the linear intensity of $2$-dimensional facet crossings, $\nu(3)$ the volume of the
unit sphere in dimension three, and $r$, $Z$ and $R$ geometrically defined as above. That is,
$r=\|U_1-U_2\|$, $Z$ is the point where the bisector plane of the line segment $[U_1,U_2]$
intersects the $x$-axis, and $R = \|Z-U_1\| = \|Z-U_2\|$.


Let $U_1 = (x_1, y_1, z_1)$ and $U_2 = (x_2, y_2, z_2)$.
The following stretch-rotation transformations are now used:
\begin{align*}
\begin{bmatrix} u_1 \\ u_2\end{bmatrix} = \begin{bmatrix} 1 & 1 \\ -1 & 1 \end{bmatrix} \begin{bmatrix} x_1 \\ x_2\end{bmatrix}~,~  \begin{bmatrix} u_3 \\ u_4\end{bmatrix} = \begin{bmatrix} 1 & 1 \\ -1 & 1 \end{bmatrix} \begin{bmatrix} y_1 \\ y_2\end{bmatrix} \qquad \text{and} \qquad  \begin{bmatrix} u_5 \\ u_6\end{bmatrix} = \begin{bmatrix} 1 & 1 \\ -1 & 1 \end{bmatrix}\begin{bmatrix} z_1 \\ z_2\end{bmatrix}.
\end{align*}

This yields ${\rm d}x_1 {\rm d}x_2 {\rm d}y_1 {\rm d}y_2 {\rm d}z_1 {\rm d}z_2=\frac{1}{8} {\rm d}u_1 {\rm d}u_2 {\rm d}u_3 {\rm d}u_4 {\rm d}u_5 {\rm d}u_6$. It follows that 
\begin{align*}
\mathbb{P}_{\Upsilon}^{0}(\Theta(0) \in (t,\pi))
&= \frac{\lambda^2}{32\mu(3)} \int_{\mathbb{R}^6}
\boldsymbol{1}_{\frac{1}{2}\left(u_1+\frac{u_3u_4}{u_2}+\frac{u_5u_6}{u_2}\right) \in [0,1]}
\times \boldsymbol{1}_{2\arcsin \sqrt{\frac{{u_2^2 + u_4^2 + u_6^2}}{u_2^2+u_3^2 + u_4^2 + u_5^2 + u_6^2 + \frac{u_3^2 u_4^2}{u_2^2}+ \frac{u_5^2 u_6^2}{u_2^2} + \frac{2 u_3u_4u_5u_6}{u_2^2}}} \in (t, \pi)} \nonumber \\
&\times e^{-\frac{A}{8} \left(u_2^2+u_3^2 + u_4^2 + u_5^2 + u_6^2 + \frac{u_3^2 u_4^2}{u_2^2}+ \frac{u_5^2 u_6^2}{u_2^2} + \frac{2 u_3u_4u_5u_6}{u_2^2} \right)^{3/2}}
\, \mathrm{d}u_1\mathrm{d}u_2\mathrm{d}u_3\mathrm{d}u_4\mathrm{d}u_5 \mathrm{d}u_6,
\end{align*}
where $A = 4\pi \lambda/3$. Now the first indicator function can be eliminated by carrying out the integration
over $u_1$, where the condition for the first indicator function is met when 
\begin{align*}
0 \leq \frac{1}{2} \left(u_1+\frac{u_3u_4}{u_2}+\frac{u_5u_6}{u_2}\right)
\leq 1, 
\end{align*}
or equivalently when $-\frac{u_3u_4}{u_2}-\frac{u_5u_6}{u_2} \leq u_1 \leq 2
- \frac{u_3u_4}{u_2} -\frac{u_5u_6}{u_2}$,
which yields a factor of $2$. 

Spherical coordinates, i.e., $(u_2, u_4, u_6) = (r \sin\psi \cos\varphi, r \sin\psi \sin\varphi, r \cos \psi)$
and polar coordinates, i.e., $(u_3,u_5)=(\rho\cos\phi,\rho\sin\phi)$
are now used to handle the second indicator function:
\begin{align}
\mathbb{P}_{\Upsilon}^{0}(\Theta(0) \in (t,\pi)) &=\frac{\lambda^2}{16\mu(3)}\int_{0}^{2\pi}{\rm d}\varphi\int_{0}^{\pi}{\rm d}\psi\sin\psi  \int_0^{2\pi}{\rm d}\phi\int_0^\infty {\rm d}r\,r^2 \nonumber \\
& \times \int_{0}^{\infty}\,\mathrm{d}\rho\,\rho \boldsymbol{1}_{2\arcsin\frac{r}{\sqrt{r^2+ a(\psi, \varphi, \phi)^2 \rho^2}}\in (t, \pi)}\,
e^{ -\frac{A}{8} \left(r^2+ a^2(\psi, \varphi, \phi)^2 \rho^2 \right)^{3/2}},
\label{eq:int_z3-3d}
\end{align}
where 
$$a^2(\psi, \varphi, \phi)^2 = \frac{1 - \left(\sin \psi \sin \phi \sin \varphi - \cos \psi \cos \phi \right)^2}{\sin^2 \psi \cos^2 \varphi}.$$

The next step is to carry out the integration with respect to $\rho$. The indicator function in question
is equal to 1 on an interval with left limit $\rho^-=0$
(the argument of the arcsine is $1$ and so $\pi>t$ is true as $t \in (0, \pi)$)
and with right limit obtained by solving 
\begin{align*}
{\frac{r^2}{r^2+a^2(\psi, \varphi, \phi)^2\rho^2  }=\sin^2\frac{t}{2}},
\end{align*}
namely $\rho^+=\frac{r}{a} \cot \frac{t}{2}$.
Hence, one can write the integrals w.r.t. $\rho$ and $r$ in \eqref{eq:int_z3-3d} as
\begin{align}
\int_0^\infty {\rm d}r  \, r^2 \int_0^{\frac{r}{a}\cot \frac{t}{2}} {\rm d}\rho \,
\rho \, e^{-\frac{A}{8}(a^2\rho^2+r^2)^{3/2}} =
\frac{32\,\Gamma(5/3)}{9a^2A^{5/3}} \left( 1 - \sin^3 \frac{t}{2}\right).
\end{align}
Thus the final step involves calculating the angular integrals as
\begin{align}
\mathbb{P}_{\Upsilon}^{0}(\Theta(0)\in (t,\pi))&= \frac{ 2\,\Gamma(5/3)\lambda^2}{9\,A^{5/3}\mu(3)} \left(1-\sin^3\frac{t}{2}\right) \int_0^{2\pi} {\rm d}\varphi \int_0^\pi {\rm d}\psi \, \sin\psi \int_0^{2\pi} {\rm d}\phi \, \frac{1}{a(\psi, \varphi, \phi)^2} \nonumber \\
&=\frac{2\,\Gamma(5/3)\lambda^2}{9\,A^{5/3}\mu(3)} \left(1-\sin^3\frac{t}{2}\right) \int_0^{2\pi} {\rm d}\varphi \int_0^\pi {\rm d}\psi \, \sin\psi \nonumber \\
&\times \int_0^{2\pi} {\rm d}\phi \, \frac{\sin^2\psi \cos^2\varphi}{1-\left(\sin\psi \sin\phi \sin\varphi - \cos\psi \cos\phi\right)^2}\nonumber  \\
&=\frac{4\pi\,\Gamma(5/3)\lambda^2}{9\,A^{5/3}\mu(3)} \left(1-\sin^3\frac{t}{2}\right) \int_0^{2\pi} {\rm d}\varphi \int_0^\pi {\rm d}\psi \, \sin^2\psi \, |\cos \varphi|\nonumber  \\
& \stackrel{A = \frac{4\pi \lambda}{3}}{=} \frac{1}{2}\left(\frac{4\pi}{3}\right)^{1/3}\frac{\Gamma(5/3)\lambda^{1/3}}{\mu(3)}\left(1-\sin^3\frac{t}{2}\right)\nonumber  \\
& =\frac 1 2 \left(1-\sin^3\frac{t}{2}\right)\,,
\label{eq:ccdf_ang-3d}
\end{align}
where the formula $\mu(3) = (4\pi/3)^{1/3}\Gamma(5/3)\lambda^{1/3}$ was used.

The pdf of $\Theta$ follows by differentiating \eqref{eq:ccdf_ang-3d} with respect to $t$, which yields
\begin{align}
f_{\Theta}(t) = \frac{3}{4}\cos \frac{t}{2} \sin^{2}\frac{t}{2},\quad t\in (0,\pi).
\end{align}
The expression for density of $\Theta$ follows from the symmetry and is given as 
\begin{align}
f_{\Theta}(t) = \frac{3}{4}\left|\cos \frac{t}{2}\right| \sin^{2}\frac{t}{2},\quad t\in (0, 2\pi).
\end{align}
\end{proof}
Note that the density is $0$ at $t = \pi$.

\section{Cellular Networking Motivations}
\label{sec:cellular}

Consider a cellular radio network where a mobile user connects to the nearest base station.
If the locations of the base stations are some realization of a stationary point process,
the service region of each base station is essentially the Voronoi cell of this base station
\cite{ABG_SG}. A mobile user moving on a straight line crosses cell boundaries of the Voronoi
tessellation, where it performs \textit{inter-cell handovers} \cite{FB_crossings} that involve
the transfer of the cellular connection between the two base stations sharing the cell boundary.
When the mobile user handset is equipped with two directional panels, the handset
might have to swap panels depending on whether the two base stations involved
in the inter-cell handover are seen by the same panel or not.
The event whether a panel swap occurs or not hence depends on the angle
at which the mobile user at the cell boundary sees the two base stations sharing the boundary.
The evaluation of the frequency of panel swaps at handover times requires evaluating 
the pdf of the angle with which the intersection point
of a randomly oriented line and the Voronoi facet sees the two nuclei that define the facet.

Assume that the base stations of a cellular radio network are located at positions that are
a realization of a Poisson point process $\Phi$ of intensity $\lambda$ in the plane as
illustrated in Figure \ref{fig:da_fig}.
The dashed straight line represents the path of a mobile user, which is assumed
to be along $x$-axis without loss of generality. A cross on the figure denotes a point
at the intersection of a Voronoi facet and the line of motion of the mobile user.
These points are those where the mobile user has to perform an inter-cell handover.

The situation motivating the previous analysis is that where the mobile
user is equipped with {\em directional panels}. The simplest situation is that where
the mobile user has two panels, one creating a beam covering the angular regions $[\chi,\chi+\pi)$, and the other a beam covering the region $[\chi+\pi,\chi+2\pi)$, where $\chi$ is uniformly distributed on $[0,\pi]$. When the mobile user reaches
an inter-cell handover point, two things may happen. If the two base stations
involved in the handover are not on the same side of the line with angle $\chi$
(i.e., are in the beams of different panels, as depicted on Figure \ref{fig:switch}),
there is a {\em panel swap}, which has a certain overhead cost.
If the base stations are on the same side of this line, there is no panel swap (see an instance
of this case on Figure \ref{fig:no_switch}) and no such cost is incurred by the mobile user.
In this context it is important to evaluate the ergodic fraction of 
inter-cell handovers that involve such a panel swap. 

The more general situation is that where there are $2^m$ panels with $m\ge 1$, each
surveying an angle (or beam) of the form $[\chi+2k\pi/2^m,\chi+2(k+1)\pi/2^m)$,
$k=0,1,\ldots, 2^m-1$. Here too, the main question is again about the fraction of
inter-cell handovers that involve a panel swap (which happens when the two base
stations are seen by the mobile user within different panel beams).
This question is answered in the next corollary of Lemma \ref{lem:lin}.

\begin{corollary}
When the typical mobile user has $2^m$ panels, $m\ge 1$,
the probability $p$ of a panel swap at the user during an inter-cell handover is 
\begin{align}
p = \frac{2^m}{\pi}\sin \frac{\pi}{2^m}, \quad m \geq 1.
\end{align}  
\end{corollary}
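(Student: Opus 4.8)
The plan is to condition on the angular mark $\Theta = \Theta(0)$ under the Palm distribution of $\Upsilon$ and, for each value $t \in (0,2\pi)$, to evaluate the probability over the random panel offset $\chi$ that the two nuclei $X_1(0)$ and $X_2(0)$ are seen within distinct panel beams; integrating this conditional probability against the density $f_\Theta$ of Lemma~\ref{lem:lin} then yields $p$ (this Palm probability is, by ergodicity, the asymptotic fraction of handover crossings at which a swap occurs). Set $w := 2\pi/2^m$, the common angular width of the $2^m$ beams, whose boundaries point in the directions $\chi + kw$, $k\in\mathbb Z$. Because $\chi$ is uniform on $[0,\pi]$ and independent of $\Phi$ and of the line, and because $\pi/w = 2^{m-1}$ is an integer, the phase $\chi \bmod w$ is uniform on $[0,w)$ and independent of the Palm version of $\Phi$; consequently, conditionally on $\Theta(0)=t$, the angular distance $U$ from the direction of $X_1(0)$ to the next beam boundary met while turning from $X_1(0)$ towards $X_2(0)$ is uniform on $[0,w)$.

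The heart of the argument is then an elementary counting step. Turning from the direction of $X_1(0)$ to that of $X_2(0)$ through the directed angle $\Theta(0)=t$, the number of beam boundaries crossed is $0$ when $U \ge t$ (which forces $t < w$) and equals $\lfloor (t-U)/w\rfloor + 1 \in \{1,\dots,2^m\}$ otherwise, and the two nuclei lie in a common beam exactly when this count is $0$ or $2^m$. For $m\ge 1$ these two cases are mutually exclusive (they would require $w > \pi$ to coincide), and a direct computation of the corresponding probabilities over $U$ gives
\begin{align*}
\mathbb{P}\big(\text{no swap}\,\big|\,\Theta(0)=t\big)\;=\;\Big(1-\tfrac{t}{w}\Big)^{+}+\Big(1-\tfrac{2\pi-t}{w}\Big)^{+},
\end{align*}
where $(\cdot)^{+}$ denotes positive part; the first term accounts for $X_1(0)$ and $X_2(0)$ lying in the same beam with no boundary between them, the second for the wrap-around case in which all $2^m-1$ remaining boundaries fall on the complementary arc.

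I would then integrate against $f_\Theta(t) = \tfrac14\sin\tfrac t2$ on $(0,2\pi)$. The symmetry $f_\Theta(2\pi - t) = f_\Theta(t)$ makes the two positive-part terms contribute equally, so
\begin{align*}
1-p \;=\; \mathbb{P}_{\Upsilon}^{0}(\text{no swap}) \;=\; 2\int_{0}^{w}\Big(1-\tfrac{t}{w}\Big)\,\tfrac14\sin\tfrac t2\,\mathrm dt,
\end{align*}
and two integrations by parts give $\int_0^w (1-t/w)\sin\tfrac t2\,\mathrm dt = 2 - \tfrac4w\sin\tfrac w2$, hence $1-p = 1 - \tfrac2w\sin\tfrac w2$. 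Therefore $p = \tfrac2w\sin\tfrac w2 = \tfrac{2^m}{\pi}\sin\tfrac{\pi}{2^m}$, which is the stated formula.

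The step I expect to be the main obstacle is the geometric bookkeeping: one must remember that $\Theta(0)$ is a directed angle valued in $(0,2\pi)$, correctly include the wrap-around contribution $\big(1-(2\pi-t)/w\big)^{+}$ (which is what distinguishes this from merely asking whether a circular arc of length $t$ contains a boundary), and check that the two ``no swap'' configurations are disjoint for every $m\ge 1$. Once the conditional no-swap probability is pinned down, the rest — independence and uniformity of $\chi \bmod w$, the symmetrisation, and the one-dimensional integral — is routine given Lemma~\ref{lem:lin}.
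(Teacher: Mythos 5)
Your proposal is correct and follows essentially the same route as the paper: condition on $\Theta$ under the Palm distribution of $\Upsilon$, exploit the independence and uniformity of the panel phase $\chi \bmod w$ with $w = 2\pi/2^m$, and integrate the resulting conditional probability against the density $f_\Theta(t)=\tfrac14\sin\tfrac t2$ of Lemma~\ref{lem:lin}. The only difference is one of bookkeeping: you compute the no-swap probability in the single closed form $(1-t/w)^{+}+(1-(2\pi-t)/w)^{+}$, whereas the paper computes the swap probability directly by splitting $\Theta$ into the ranges $\bigl(0,\tfrac{\pi}{2^{m-1}}\bigr)$, $\bigl(\tfrac{\pi}{2^{m-1}},2\pi-\tfrac{\pi}{2^{m-1}}\bigr)$ (where a swap is certain), and the symmetric upper range, counting the $2^m$ disjoint swap events; both reduce to the same integral and yield $p=\tfrac{2^m}{\pi}\sin\tfrac{\pi}{2^m}$.
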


\begin{figure*}[t]
	\begin{minipage}{0.32\linewidth}
		\begin{center}
			\includegraphics[scale=0.7]{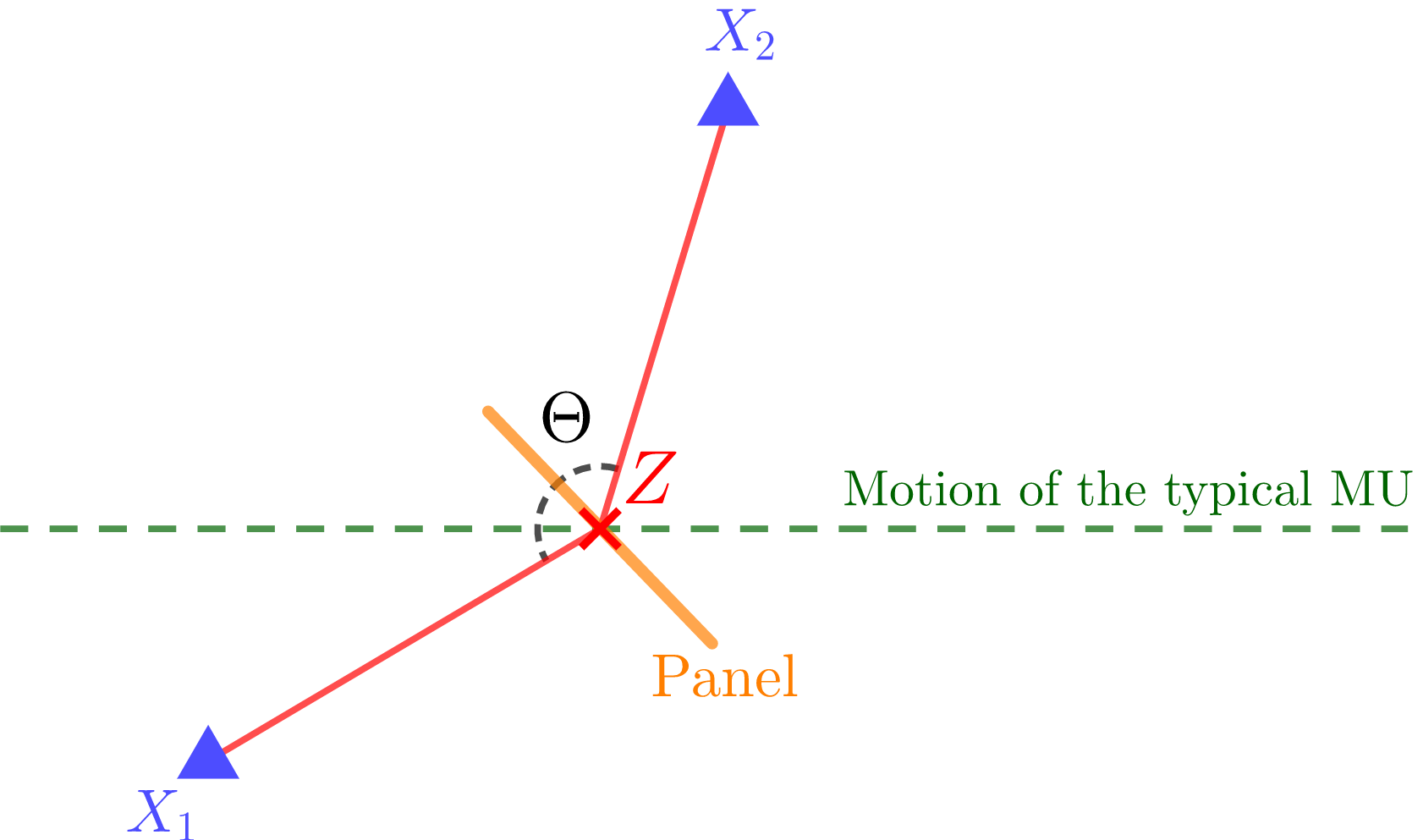}
			\caption{\footnotesize Panel swap with $2$ panels. }
			\label{fig:switch}
		\end{center}
	\end{minipage}
	\hspace{0.2\linewidth}
	\begin{minipage}{0.35\linewidth}
		\begin{center}
			\includegraphics[scale=0.7]{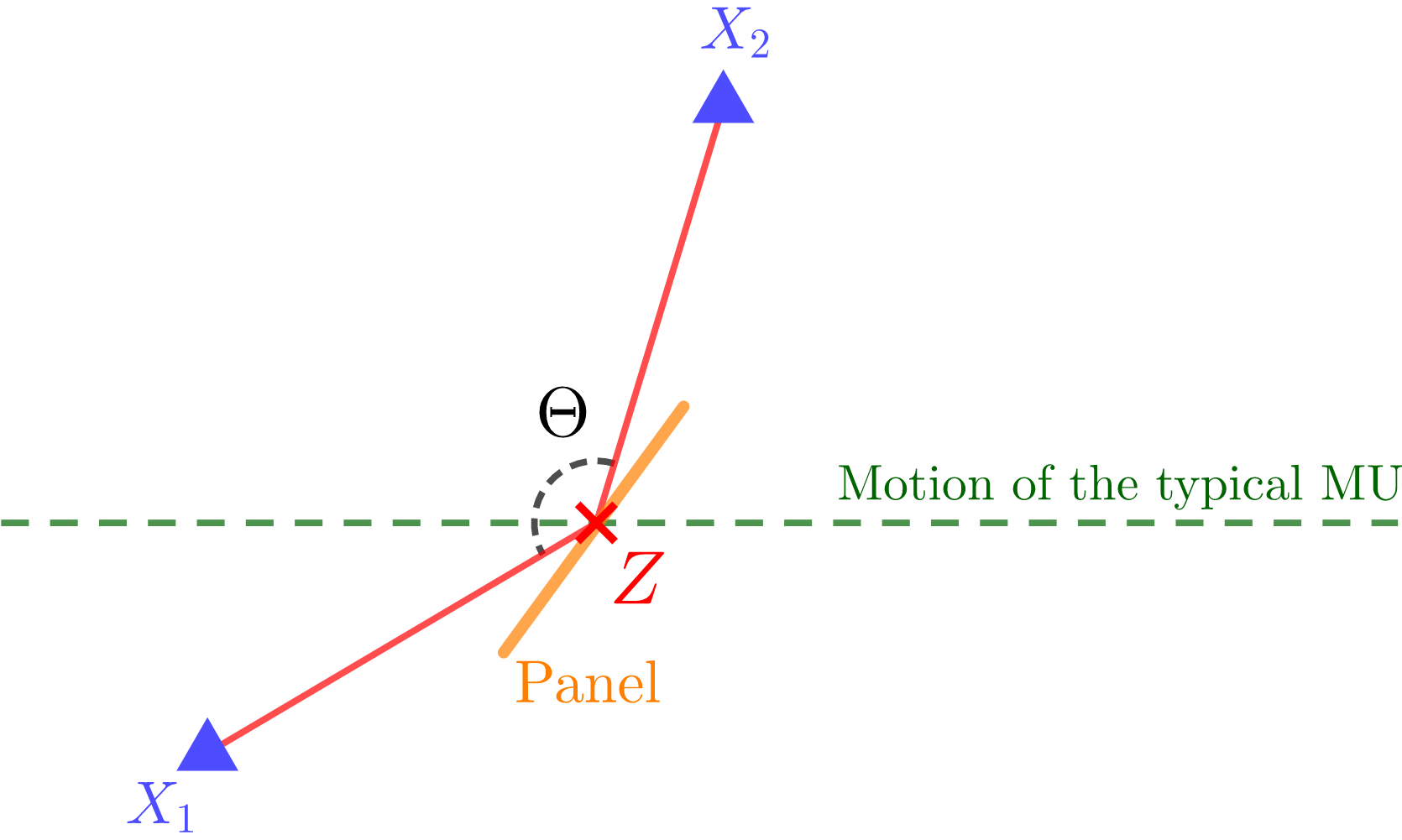}
			\caption{\footnotesize No panel swap with $2$ panels.}
			\label{fig:no_switch}
		\end{center}
	\end{minipage}
	\end{figure*}

\begin{proof}
The case of two panels, i.e., $m=1$, is considered first.
Without loss of generality, the coordinate system  can be taken such that the
inter-cell handover point is the origin $O$. Let $X$ denote the minimal Delaunay
neighbor, $A(X)$ its angle, and $\Theta$ the angle with which $O$ sees the two Delaunay neighbors, with
the foregoing conventions.

As Figures \ref{fig:switch} and \ref{fig:no_switch} show, a panel swap occurs if and only if
one of the two ends of the panel is ``within" the angle $\Theta$. More precisely, let
$\chi$ denote the angle of the panel, which is uniformly distributed on $(0, \pi)$.
If $\Theta\in (0,\pi)$, there is a panel swap if and only if
either $\chi\in (A(X),A(X)+\Theta)$ or $\chi+\pi \in (A(X),A(X)+\Theta)$,
and these two events cannot simultaneously hold.
Similarly, if $\Theta\in (\pi,2\pi)$, there is a panel swap if and only if
either $\chi\in (A(X)+\Theta, A(X))$ or $\chi+\pi \in (A(X)+\Theta,A(X))$,
with these two events excluding each other.
Since $\Theta$, $A(X)$ and $\chi$ are independent,
the probability of a panel swap is
\begin{eqnarray}
p &= &  
\int_0^\pi f_\Theta(t) \frac 1{2\pi}\left(\int_0^t \mathrm{d}u + \int_0^{t} \mathrm{d}u\right) \mathrm{d}t
+\int_\pi^{2\pi} f_\Theta(t) \frac 1{2\pi}\left(\int_0^{2\pi-t} \mathrm{d}u + \int_0^{2\pi-t} \mathrm{d}u\right) \mathrm{d}t
\nonumber \\
&= & \frac{1}{2\pi}\int_{0}^{\pi} t\sin{\frac t 2} \mathrm{d}t  \nonumber \\
&= & \frac{2}{\pi},
\end{eqnarray}
where the expression obtained in~\eqref{eq:pdf_ang-2d} was used.

This can be generalized to the case where the mobile user has $2^m$ panels, with $m\ge 1$.

Using the same notation as above, if $\Theta \in (0, \pi)$, a panel swap occurs if and only if
there is at least one $k = 0, 1, \dotsc, 2^m-1$ such that 
\begin{equation}
\label{eq:2tothem}
\chi + \frac{2k\pi}{2^{m}} \in (A(X), A(X) + \Theta).\end{equation}
If $\Theta \in (\pi, 2\pi)$, a panel swap occurs if and only if
\begin{equation}
\label{eq:2tothem2}
\chi + \frac{2k\pi}{2^{m}} \in (A(X) + \Theta, A(X)),
\end{equation}
for some $k= 0, 1, \dotsc, 2^m-1$.
Hence a panel swap is certain if
$\Theta \in \left(\frac{\pi}{2^{m-1}}, 2\pi-\frac{\pi}{2^{m-1}}\right)$.
The cases $\Theta \in \left(0, \frac{\pi}{2^{m-1}}\right)$
and $\Theta \in \left(2\pi-\frac{\pi}{2^{m-1}}, 2\pi \right)$ are symmetrical.
For $\Theta \in \left(0, \frac{\pi}{2^{m-1}}\right)$ 
(resp. $\Theta \in \left(2\pi-\frac{\pi}{2^{m-1}}, 2\pi\right)$),
there are $2^m$ symmetrical possibilities for a panel swap, one for each value of 
$k= 0, 1, \dotsc, 2^m-1$ in Equation (\ref{eq:2tothem}) (resp. (\ref{eq:2tothem2})). These events are disjoint
and have the same probability.
Since $\Theta$, $A(X)$, and $\chi$ are independent, the probability of a panel swap is hence
\begin{align}
p &= 2^{m+1}\int_{0}^{\frac{\pi}{2^{m-1}}}\mathrm{d}t f_{\Theta}(t)\frac{1}{2\pi} \int_{0}^{t}\mathrm{d}u + \int_{\frac{\pi}{2^{m-1}}}^{2\pi - \frac{\pi}{2^{m-1}}} f_{\Theta}(t)\mathrm{d}t \nonumber \\
&= \frac{2^m}{4\pi} \int_{0}^{\frac{\pi}{2^{m-1}}}t\sin \frac{t}{2} \mathrm{d}t + \frac{1}{4}\int_{\frac{\pi}{2^{m-1}}}^{2\pi - \frac{\pi}{2^{m-1}}} \sin \frac{t}{2}\mathrm{d}t \nonumber \\
& = \frac{2^m}{\pi} \left[\sin t - t \cos t\right]_0^{\frac{\pi}{2^m}}+ \cos \frac{\pi}{2^m} \nonumber \\
&= \frac{2^m}{\pi}\sin \frac{\pi}{2^m} - \cos \frac{\pi}{2^m} + \cos \frac{\pi}{2^m} \nonumber \\
&= \frac{2^m}{\pi}\sin \frac{\pi}{2^m}.
\end{align}

\end{proof}

\section*{Acknowledgements}
This work was  
supported by the ERC NEMO grant, under the European Union's Horizon 2020 research and innovation programme,
grant agreement number 788851 to INRIA.
The authors thank F. Abinader and L. Uzeda-Garcia of Bell Laboratories for the discussions that
led to the formulation of the problem. They also thank D. Stoyan, J. M{\o}ller, and M. Haenggi for their
comments on the topic discussed in this note.

\end{document}